\newcommand{\D}{\mathrm{d}}
\newcommand{\lb}{\left(}
\newcommand{\rb}{\right)}
\newcommand{\PD}{\partial}
\newcommand{\Sb}{\mathbb{S}}
\newcommand{\Beq}{\begin{equation}}
	\newcommand{\Eeq}{\end{equation}}
\newcommand{\beq}{\begin{equation*}}
	\newcommand{\eeq}{\end{equation*}}
\newcommand{\bal}{\begin{align}}
	\newcommand{\eal}{\end{align}}
\newcommand{\n}{\nabla}
\newcommand{\bp}{\begin{prob}}
\newcommand{\bpr}{\begin{proof}}
	\newcommand{\epr}{\end{proof}}
\newcommand{\bel}[1]{\begin{equation}\label{#1}}
	\newcommand{\ee}{\end{equation}}
\newtheorem{theorem}{Theorem}[section]
\newtheorem{corollary}[theorem]{Corollary}
\newtheorem{lemma}[theorem]{Lemma}
\theoremstyle{definition}
\newtheorem{remark}[theorem]{Remark}
\newcommand{\R}{{\mathbb R}}
\newcommand{\C}{{\mathbb C}}
\newcommand{\be}{\begin{eqnarray}}
\newcommand{\ben}{\begin{eqnarray*}}
\newcommand{\en}{\end{eqnarray}}
\newcommand{\enn}{\end{eqnarray*}}
\newcommand{\real}{{\rm Re\,}}
\newcommand{\mm}[1]{{\color{black}{#1}}}
\definecolor{rot}{rgb}{0.000,0.000,0.000}
\newcommand{\tcr}{\textcolor{rot}}
\title[ Uniqueness for time-dependent inverse problems with single dynamical data ]{ Uniqueness for time-dependent inverse problems
\\ \mm{with single dynamical data}}
\author[Ben A\"icha, Hu, Vashisth and Zou]{ Ibtissem Ben A\"icha$^{1}$, Guang-Hui Hu$^2$,  Manmohan Vashisth$^{3}$ and Jun Zou$^{4}$}
\address{  $^{1}$ Beijing Computational Science Research Center, Beijing 100193, China.
	\newline
	\indent E-mail:{\tt \ ibtissem@csrc.ac.cn}}
\address{ $^2$ Beijing Computational Science Research Center, Beijing 100193, China.
	\newline
	\indent E-mail:{\tt \ hu@csrc.edu}}
\address{$^3$ Beijing Computational Science Research Center, Beijing 100193, China.
	\newline
	\indent E-mail:{\tt\  mvashisth@csrc.ac.cn}}
	\address{$^4$ Department of Mathematics, The Chinese University of Hong Kong, Hong Kong, Hong Kong Special Administrative Region of China.
	\newline
	\indent E-mail:{\tt\  zou@math.cuhk.edu.hk}}	
\begin{document}
%


	\maketitle
\begin{abstract}
	\mm{In this work, we investigate the shape identification and coefficient determination associated with
	two} time-dependent partial differential equations in two dimensions. We consider the inverse problems of determining \tcr{a convex polygonal obstacle} and the  coefficient  appearing in  the wave and Schr\"odinger equations from a single dynamical data
	\mm{along with the time}.
	\mm{With the far field data, we first prove} that the sound speed of the wave equation \tcr{together with its contrast support of convex-polygon type} can be uniquely determined, \mm{then establish} a uniqueness result
	for recovering an electric potential as well as its support appearing in the Schr\"odinger equation.
	As a consequence of these results,  we \mm{demonstrate} a uniqueness result for recovering the refractive index of a medium from a single far field pattern \tcr{at a fixed frequency in the time-harmonic regime}.

\end{abstract}
	
\noindent\textbf{Keywords:} Wave equation, Schr\"odinger equation, inverse problem, uniqueness, single measurement, time-domain, multi-frequency analysis, shape identification, coefficient determination.\\


\maketitle


\section{Introduction}
\noindent \mm{This work aims at the mathematical understanding of the unique identifiability
for three coefficient/obstacle inverse problems concerning the wave, the Schr\"odinger and the Helmholtz equations.
We first introduce} some notations which will be used throughout \mm{the work.}
For  $R>0$, \mm{we shall write} $B_R:=\{x\in \R^{2}: |x|<R\}$, $\Gamma_{R}:=\{x\in\R^{2}:\ \lvert x\rvert =R\}$,
and $\tcr{\Sb}:=\{x\in\R^{2}:\ \lvert x\rvert =1\}$.
\subsection{Formulation of \mm{the inverse wave problem}}\label{Sub:wave equtaion formulation}
Consider the propagation of acoustic waves in an unbounded inhomogeneous background medium due to a compactly supported source term in $\R^2$. This can be modeled by the \tcr{inhomogenous} wave equation
\be\label{wave-equation}
\frac{1}{c^2(x)}\frac{\partial^2 u}{\partial t^2}=\Delta u+ f(x)g(t)\quad\mbox{in}\quad \R^2\times \R_+,
\en together with the initial conditions
\be\label{Initial-wave}
u(x,0)=\partial_t u(x,0)=0\quad\mbox{on}\quad \R^2.
\en
For $c\in \tcr{\mathcal{C}}(\R^{2})$, $f\in L^{2}(\R^{2})$ and $g\in L^{1}(\R_{+})$,
\mm{the initial value problem} \eqref{wave-equation}-\eqref{Initial-wave} admits a unique solution $u$ such that $u\in \tcr{\mathcal{C}}\lb \R_{+};H^{1}(\R^{2}\rb \cap \tcr{\mathcal{C}}^{1}\lb\R_{+};L^{2}(\R^{2})\rb$.
In the first part of the paper, we study the inverse problem of determining the sound speed $c$ \mm{in Equation \eqref{wave-equation}}
and the shape of \mm{an unknown inhomogeneous medium, namely,
supp$\lb 1-c(x)\rb$,}  from the knowledge of a single dynamical data.
More precisely, we prove \mm{that the sound speed and its support are unique, under}
the knowledge of the solution $u$ measured on $\Gamma_{R}\times \R_{+}$, \tcr{provided they satisfy some a priori conditions (see Section \ref{Sec:Statement main result} below).}

\subsection{Formulation of \mm{the inverse Schr\"odinger problem}}\label{Sub:Schroedinger equtaion formulation}
The second  part of the \mm{work} deals with two inverse problems \mm{arising from}
the Schr\"odinger equation defined in $\R^2\times \R_{+}$. More precisely, \mm{we intend} to uniquely determine a compactly supported electric potential as well as its convex polygonal support, from the knowledge of the boundary observation.   \mm{We shall consider} the time-dependent  Schr\"odinger equation
\begin{equation}\label{Eq1.1}
     (i\PD_{t}u+\Delta +q(x))u(x,t)=0 \qquad  \mbox{in}\,\, \R^2\times \R_{+},
     \end{equation}
together with the initial condition
 \begin{equation}\label{initial-Schrodinger}
 u(x,0)=u_{0}\qquad \mbox{in}\,\, \R^2,
  \end{equation}
where  $u_0\in H^{2}(\R^2)$ and $q\in \mathcal{C}(\R^2)$ is the compactly supported electric potential that is assumed to be
\mm{a real-valued function.}  According to \cite{[Lions]}\color{black}, \mm{the initial value problem (\ref{Eq1.1})-(\ref{initial-Schrodinger}) is well posed, with a unique solution $u\in \mathcal{C}(\R_{+}; H^2(\R^2))$,
also satisfying \tcr{the energy identity}
}
  \begin{equation}\label{energy consevation}
  \|u(\cdot,t)\|_{L^2(\R^2)}=\|u_0\|_{L^2(\R^2)}\tcr{\quad\mbox{for all}\quad t>0.}
  \end{equation}
 Our goal in the second part of the paper is to deal with the inverse problem of determining the electric potential $q$ as well as its polygonal support $D\!:=\!\mbox{supp}\,(q)\subset B_R$ from the knowledge of the boundary measurement $u_{|\Gamma_{R}\times \R_+}$.
\subsection{Formulation of \mm{the inverse Helmholtz problem}}
Consider the time-harmonic medium scattering problem \tcr{for the total field \mm{$v=v^{in}+v^{sc}$}}:
\begin{equation}\label{Equation for v time-harmonic}
\Delta v+k^2 n(x)v=0\quad\mbox{in}\quad \R^2\,, 
\end{equation}
\mm{where $k>0$ is the wave number of the homogeneous  isotropic background medium,}
and the refractive index function $n$ is supposed to satisfy $n\equiv 1$ in $|x|>R$ for some $R>0$. The incident wave
$v^{in}$ is allowed to be \mm{either a plane wave of the form}
\ben
v^{in}(x)=e^{ikx\cdot d},\quad d=(\cos\theta,\sin\theta)^T\tcr{\in \Sb}
\enn
\mm{where $d$ is a fixed incident direction $d$, and} \tcr{$\theta\in[0, 2\pi)$ is the incident angle},
or a point source wave emitting from the fixed source position $z$, taking the form
\ben
v^{in}(x)=\frac{i}{4}\,H_0^{(1)}(k|x-z|),\quad x\neq z,
\enn
\tcr{where $H_0^{(1)}(\cdot)$ is the Hankel function of the first kind of order zero.}
The scattered field \mm{$v^{sc}$} is required to fulfill the Sommerfeld radiation condition
\ben
\sqrt{r}\left(\partial_r v^{sc}-ik v^{sc}\right)\rightarrow 0\quad\mbox{as}\quad r=|x|\rightarrow \infty
\enn
\tcr{uniformly in all directions  $\widehat{x}:=x/r$},
leading to the far-field pattern $v^\infty$ in the asymptotic behavior
\ben
v^{sc}(x)=\frac{e^{ikr}}{\sqrt{r}}\left(v^\infty(\widehat{x})+O\lb\frac{1}{r}\rb\right)\quad\mbox{as}\quad r\rightarrow\infty.
\enn
 Our aim in this part is to consider the inverse problem of determining the refractive index $n$ and the shape of the supp$(1-n)$ from the knowledge of $v^{\infty}(\widehat{x})$  for $\widehat{x}\in \Gamma_{1}$. We prove that \tcr{an adimissible set of} $n$ and supp$(1-n)$  can be determined uniquely from the far field pattern \mm{$v^{\infty}$} of a single incident plane wave or \mm{a} point source.

\subsection{Literature review}
\mm{Inverse problems of partial differential equations (PDEs) is a very broad field of
various research directions, among which the inverse coefficients and/or obstacles problems
have recently attracted a tremendous attention, particularly from the mathematical point of view.}
\tcr{Inverse coefficient} problems for time-dependent PDEs \mm{were widely studied,
but still not much progress has been made for inverse transmission problems of recovering interfaces.}
\medskip

This paper will mainly be concerned with \mm{the simultaneous identification of both obstacles and coefficients}
appearing in some PDEs. More precisely, our \mm{main focus is on} the uniqueness issue for time-dependent problems with
a single dynamical  data. There is a wide mathematical literature on this topic, but it is mostly concerned with the knowledge of
\mm{sufficiently large measurement data; for example, the entire} Dirichlet-to-Neumann map. \mm{We shall consider the cases
that are very important in applications, namely, only one single dynamical data is available},
and focus on the determination of \tcr{both the geometrical shape of convex penetrable scatterers} and some coefficients appearing in the wave,  the dynamical Schr\"odinger and the Helmholtz equations.
\medskip

\mm{The wave model \eqref{wave-equation}-\eqref{Initial-wave} may be used in many applications, such as
the thermoacoustic (TAT) and photoacoustic (PAT) tomographies;
see, e.g.,} \cite{Kuchment_Kunyansky_TAT,Kuchment_Book,Stefanov_TAT,Stefanov_Yang_TAT_PAT,Otmar_Book}
and references therein.  There have been some results about the determination of the sound speed or the source term in the wave equation from a single measurement data. \mm{A uniqueness result was studied in \cite{Hristova}
for determining the constant sound speed and in  \cite{Stefanov_Uhlmann_Source or sound}
for the recovery of the source term or the sound speed provided that one of them is known.
The unique recovery of both the sound speed and the source term was considered
in \cite{Finch_Hickmann_Transmission-TAT} for the case when the sound speed is radial.
Recently, the result of \cite {Liu_Uhlmann_sound-source-determination} was improved
in \cite{Christina_Moradifm_Single-meausremet} to more general coefficients, indicating that
the sound speed can be recovered from a single measurement provided it is a harmonic function.
We consider in this work the uniqueness result for simultaneously recovering both the sound speed and its convex polygonal
support from a single dynamical data.}  For more related works, we refer to \cite{Belishev_Kurylev_Boundary-control,Helin_Lassas_Oksanen_Single,HK2018,Liu_Oksanen_Stability,Oksanen_Uhlmann_Photoacoustic,Pestov_Reconstruction _sound-speed,Pestov_absorption and sound speed,Rakesh_Salo,Rakesh_Yuan_Recovering_initial_Values,Stefanov_Uhlmann_both_sound-source}.
\medskip

\mm{In regard with} the determination of potentials appearing in the dynamical Schr\"odinger equation,
there are many studies in the literature, but they are mostly concerned with the determination of
the potentials from \mm{infinitely many} boundary measurements.
We refer to, e.g., \cite{B,MC,BD,ib1,ib2,MYE,CE,Eskin,Sun,Ltzou} and references therein.  To the best of our knowledge,
\mm{we are not aware of any existing studies of the determination of potentials from a single dynamical data,
and this is one of the main motivations of this work.}
In the frequency domain, the determination of potentials appearing in the Schr\"odinger equation was studied in \cite{[Stef]},
\mm{where a uniqueness result was established for} recovering the small potentials from the knowledge of the scattering amplitude.
This result \mm{was improved later} in \cite{Ramm} for more general electric potentials.
\medskip

\mm{All the aforementioned studies} are concerned only with  inverse coefficient problems. For determining the shape of a sound-soft obstacle, the recent development \tcr{in the time domain} can be found in \cite{[L]},  in which an inverse obstacle problem for an acoustic transient wave equation is considered. The authors in \cite{[L]} proved a uniqueness result for the determination of a sound-soft obstacle from the lateral Cauchy data given on a subboundary of an open bounded domain. In \cite{[I1]} the uniqueness and stability issues for recovering penetrable or impenetrable obstacles from various boundary data \tcr{were considered}.
\mm{We shall restrict our consideration in this work} to determine the shape of a convex polygonal penetrable scatterer
using a single dynamical data. \mm{by means of the data along all the time,} we prove in the theory that the Laplace transform can be applied to the measurement data and the time-dependent inverse problem is thus transformed to an equivalent problem in the Fourier-Laplace domain with many parameters (frequencies).  To apply the recently developed shape identification theory \cite{EH2018, HLM}  to inverse coefficient problems in a corner domain, \mm{we shall show} that there exists at least one parameter (frequency) for which the Laplace-transformed solution cannot vanish at the corner points (see  Lemma \ref{Lemma about uniqueness for source term},
also \cite{Ikehata_Reconstruction_source}).
We will then prove via asymptotic analysis that this parameter in two dimensions can be \mm{taken sufficiently small}
for wave equations and \mm{sufficiently large} for the Schr\"odinger equation (see Lemmas  \ref{Lem:asy} and \ref{Lem:high}).
We refer to \cite{BCJ,ElHu2015,EH2018,Hu_Salo_Vesalainen_Shape-identification,Hu_Li_Zou_Uniquness_Maxwell-penetrable, [I2], Friedman_Isakov_Uniquenss_conductivity_single-measurement,Ikehata_Scattering-enclosure,Ikehata_Reconstruction_single-measurement,Ikehata_Reconstruction_source,HLM,Li_Hu_Yang_Interface-weakly singular} for \tcr{related works on target identification with a single measurement data in the stationary case}.
\medskip


\mm{The rest of the paper}s is organized as follow.
\mm{In Sections  \ref{Sec:Statement main result} and \ref{subsec:well-posedness wave}, we state our main results and present some preliminary results that will be used to prove the main theorems of the work.}
In Sections \ref{Sec:Proof of main theorem wave}, \ref{Sec: Proof of main Theorem Schrodinger} and  \ref{Sec:Proof of main theorem time-harmonic} we prove \mm{our main results, Theorems} \ref{Main Theorem wave}, \ref{THM2} and \ref{Main theorem time-harmonic} respectively. \tcr{Some concluding remarks and open problems are summarized in Section \ref{cr}.}

\section{Statement of the Main results}\label{Sec:Statement main result}
\noindent In order to state our main results, \mm{we first} introduce some notations.
\mm{For some $R>0$, let} $D_{i}\subset B_{R}$  for $i=1,2$ be two convex polygons. For any  corner point $\mathcal{O}$ of $\PD D_{i}$ for $i=1,2$,   we denote by $$B_{\epsilon}(\mathcal{O}):=\{x\in\R^{2}:\ \lvert x-\mathcal{O}\rvert <\epsilon\},$$ for some  $\epsilon>0$. We  denote  by $E$ a subset of $B_{R}$ satisfying $E\subset B_{R}\backslash{(\overline{D_{1}\cup D_{2}})}$.

\begin{center}
	\includegraphics[width=5.7cm,height=3.7cm]{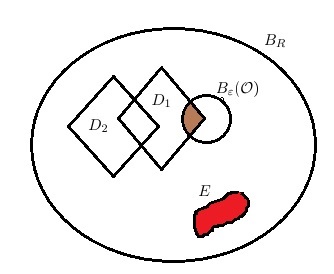}
\end{center}

\medskip
 For $A(x)=(a_{1}(x),a_{2}(x))\in \lb L^{\infty}(B_{R})\rb^{2}$ and $b\in L^{\infty}(B_{R})$, we  define \mm{a set $S(A,b)$ by}
 \[S(A,b):= \Big\{v(x): \Delta v(x)+A(x)\cdot\nabla v(x)+b(x)v(x)=0 \ \mbox{in} \ B_{R}\Big\}.\]

\subsection{\mm{Inverse wave problem}}\label{sec:wave} Here we state the main result for the inverse problem related to wave Equations \eqref{wave-equation} and \eqref{Initial-wave}. For $\alpha\in (0,1)$ we define the following admissible set of  coefficient $c$:
\begin{align*}
\mathcal{C}_{D}:=\left\{c\in C^{0,\alpha}\lb \overline{B_{\epsilon}(\mathcal{O})\cap D}\rb\ \mbox{for each corner $\mathcal{O}$ of $\PD D$ and some $\epsilon>0$}:\ supp(1-c)=D\right\}.
\end{align*}
\begin{theorem}\label{Main Theorem wave}
\mm{Let $(D_i, c_i)$ ($i=1,\,2$)} be two pairs of convex polygonal \tcr{scatterers} $D_{i}$ and  sound speeds $c_{i}$ such that $c_{i}\in\mathcal{C}_{D_{i}}$ for $i=1,2.$  \tcr{Let $f\in C_{0}^{\infty}(E)$ and $g\in C_{0}^{\infty}(0,T)$ be two non-vanishing functions}, \mm{and $u_{i}(x,t)$ be the \tcr{unique} solution to Equations \eqref{wave-equation} and \eqref{Initial-wave} with
$c$ replaced by $c_{i}$ for $i=1,2$.}
If
\begin{align}\label{Equality of given data}
u_{1}(x,t)=u_{2}(x,t),\ \mbox{for all} \quad x\in\Gamma_{R} \ \mbox{and} \ t>0\,,
\end{align}
then $D_{1}=D_{2}$. Moreover, Equation \eqref{Equality of given data} \mm{also} implies that $c_{1}=c_{2}$ provided the following conditions \tcr{hold}:
\begin{itemize}
	\item[(i)] $1/c^2_{i}(x)=V_{i}|_{\overline{\tcr{D_i}}}$ for $x\in \overline{\tcr{D_i}}$ and some function $V_{i}\in S(A,b)$ where $A$ and $b$ are analytic functions near the corner points of $\tcr{D_i}$.
	\item[(ii)] $\widehat{g}(0)\neq 0$ and $\int_{\R^2} f(y)\,dy\neq 0$, where \mm{$\widehat{g}$} denotes the Laplace transform of $g$.
\end{itemize}

\end{theorem}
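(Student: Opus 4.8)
The plan is to eliminate time by a Laplace transform, turning the single dynamical datum into a family, indexed by the frequency $s>0$, of Cauchy data for a modified Helmholtz equation to which the corner-scattering theory of \cite{EH2018,HLM} applies. First I would set, for $\operatorname{Re}s>0$, $\widehat{u}_i(x,s):=\int_0^\infty e^{-st}u_i(x,t)\,dt$ and $\widehat{g}(s):=\int_0^\infty e^{-st}g(t)\,dt$; since the initial data vanish, \eqref{wave-equation} transforms into
\[
\Delta\widehat{u}_i-\frac{s^2}{c_i^2(x)}\,\widehat{u}_i=-\widehat{g}(s)\,f(x)\quad\text{in }\R^2 ,
\]
a well-posed problem with $\widehat{u}_i$ decaying at infinity for each fixed $s$. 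The hypothesis \eqref{Equality of given data} gives $\widehat{u}_1=\widehat{u}_2$ on $\Gamma_R$ for all $s>0$; uniqueness for the decaying exterior problem for $\Delta-s^2$ propagates this to $\R^2\setminus\overline{B_R}$, so the Neumann traces also agree on $\Gamma_R$. Setting $w:=\widehat{u}_1-\widehat{u}_2$, in the connected region $B_R\setminus\overline{D_1\cup D_2}$ both coefficients equal $1$ and the source is common, so $\Delta w-s^2w=0$ there with zero Cauchy data on $\Gamma_R$, and unique continuation yields $\widehat{u}_1=\widehat{u}_2$ in $B_R\setminus\overline{D_1\cup D_2}$ for every $s>0$.

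Next I would prove $D_1=D_2$ by contradiction. If $D_1\neq D_2$, convexity lets me choose a corner $\mathcal{O}$ of, say, $\partial D_1$ and a ball $B_\epsilon(\mathcal{O})$ that misses $D_2$. On $B_\epsilon(\mathcal{O})\setminus\overline{D_1}$ the two transforms coincide, so $\widehat{u}_2$ extends across $\mathcal{O}$ as a solution of the constant-coefficient equation $\Delta\widehat{u}_2-s^2\widehat{u}_2=0$, whereas $\widehat{u}_1$ solves the genuine transmission problem with a corner in the coefficient $1/c_1^2$ at $\mathcal{O}$ — the $C^{0,\alpha}$-regularity encoded in $\mathcal{C}_{D_1}$ being exactly the hypothesis demanded by the corner-scattering results. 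The shape-identification theory of \cite{EH2018,HLM} then forces the common value at the corner to vanish, $\widehat{u}_1(\mathcal{O},s)=0$, and this must hold for every $s>0$.

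The crux, and what I expect to be the main obstacle, is to contradict this vanishing by producing a single frequency at which the field does not vanish at $\mathcal{O}$; unlike the plane-wave case, a source problem offers no closed-form value at the corner, so I would argue asymptotically as $s\to0^+$. Writing $\widehat{u}_1(x,s)=\widehat{g}(s)\int_E G_s(x,y)f(y)\,dy$ with $G_s$ the Green function of $\Delta-s^2/c_1^2$, and using the logarithmic behaviour $\tfrac1{2\pi}K_0(s|x-y|)=\tfrac1{2\pi}\ln\tfrac1s+O(1)$ of the two-dimensional fundamental solution, I obtain
\[
\widehat{u}_1(\mathcal{O},s)=\frac{\widehat{g}(0)}{2\pi}\Bigl(\ln\tfrac1s\Bigr)\int_E f(y)\,dy+O(1)\qquad (s\to0^+).
\]
Under condition (ii) the prefactor $\widehat{g}(0)\int_E f$ is nonzero, so $\widehat{u}_1(\mathcal{O},s)\to\infty$, contradicting $\widehat{u}_1(\mathcal{O},s)=0$. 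This is the content I would isolate as Lemma \ref{Lemma about uniqueness for source term} together with the small-frequency estimate Lemma \ref{Lem:asy}, and it gives $D_1=D_2=:D$.

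Finally, to recover $c$, note that $\widehat{u}_1=\widehat{u}_2$ in $B_R\setminus\overline{D}$ supplies identical Cauchy data of $\widehat{u}_1,\widehat{u}_2$ on $\partial D$ for all $s$, while inside $D$ each solves $\Delta\widehat{u}_i=s^2V_i\widehat{u}_i$ with $V_i:=1/c_i^2$. By condition (i) both $V_i$ lie in $S(A,b)$, so $V_1-V_2$ obeys $\Delta(V_1-V_2)+A\cdot\nabla(V_1-V_2)+b(V_1-V_2)=0$, and $V_1-V_2=0$ on $\partial D$ since $c_i\equiv1$ there. Because $A,b$ are analytic near the corners, each $V_i$ is real-analytic there; expanding the two solutions with common Cauchy data at a corner $\mathcal{O}$ (where $\widehat{u}(\mathcal{O},s)\neq0$ for small $s$, again by Lemma \ref{Lem:asy}) matches all Taylor coefficients of $V_1$ and $V_2$ at $\mathcal{O}$, so $V_1-V_2$ vanishes to infinite order at $\mathcal{O}$. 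Strong unique continuation for $\Delta+A\cdot\nabla+b$ then forces $V_1\equiv V_2$ throughout $D$, that is $c_1=c_2$, completing the plan.
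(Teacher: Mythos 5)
Your overall architecture---Laplace transform in time, corner scattering at a corner of $\partial D_1\setminus \partial D_2$, a small-frequency asymptotic lemma, and the $S(A,b)$ corner argument for the coefficient---matches the paper's, and your coefficient-determination step is essentially the paper's subsection \ref{sec:coefficient} (zero Cauchy data for $w=\widehat{u}_1-\widehat{u}_2$ on $\Gamma$, source $s^2(V_1-V_2)\widehat{u}_2$ with $\widehat{u}_2(\mathcal{O},s_0)\neq 0$ supplied by Lemma \ref{Lem:asy}, then infinite-order vanishing and unique continuation for $V_1-V_2$). However, there is a genuine gap in your shape identification: your contradiction rests on the small-$s$ blow-up $\widehat{u}_1(\mathcal{O},s)\to\infty$, which requires $\widehat{g}(0)\neq 0$ and $\int_{\R^2}f(y)\,dy\neq 0$, i.e.\ condition (ii). But the theorem asserts $D_1=D_2$ under the sole assumption that $f$ and $g$ are non-vanishing; conditions (i)--(ii) are hypotheses only for the ``moreover'' claim $c_1=c_2$. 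If, say, $\int f\,dy=0$ or $\widehat{g}(0)=0$, your argument gives no contradiction with $\widehat{u}_1(\mathcal{O},s)=0$, so what you prove is a strictly weaker statement than the first (and main) claim of the theorem.

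The paper avoids this by using the full strength of the corner lemma of \cite{EH2018}: matching Cauchy data across the corner, together with the non-vanishing contrast $p_1(\mathcal{O},s)\neq s^2$ (equivalently $c_1(\mathcal{O})\neq 1$, a hypothesis you do not mention but which the corner results require in addition to the $C^{0,\alpha}$ regularity), forces $\widehat{u}_1=\widehat{u}_2\equiv 0$ on the whole ball $B_\epsilon(\mathcal{O})$, not merely at the corner point. Unique continuation then propagates this vanishing to $B_R\backslash\overline{D_1\cup E}$, so on a neighborhood $E^*$ of the source region, $\widehat{u}_1$ solves $\Delta\widehat{u}_1-s^2\widehat{u}_1=f\widehat{g}(s)$ and vanishes identically near $\partial E^*$. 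Pairing this equation with the exponential solutions $v=e^{-sx\cdot\omega}$, $\omega\in\Sb$, and integrating by parts gives $\widehat{g}(s)\widehat{f}(s\omega)=0$; since $g\not\equiv 0$ there is an open interval of $s$ with $\widehat{g}(s)\neq 0$, and since $f$ is compactly supported its transform is analytic, whence $f\equiv 0$---a contradiction using only the non-vanishing of $f$ and $g$. You should substitute this argument in the shape step, keeping your asymptotic lemma for the coefficient step, where condition (ii) is genuinely available and needed. Two minor further points: your closed-form small-$s$ expansion via the Green function of the variable-coefficient operator $\Delta-s^2/c_1^2$ hides the real work (the paper instead uses the constant-coefficient fundamental solution and bootstraps bounds on $s^2\widehat{u}$ and $s\widehat{u}$); and your remark that $V_1-V_2=0$ on $\partial D$ ``since $c_i\equiv 1$ there'' is unjustified---the one-sided corner values satisfy $c_i(\mathcal{O})\neq 1$---though fortunately it plays no role in your corner-expansion argument.
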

\subsection{\mm{Inverse Schr\"odinger problem}}
In order to express the main statement of the second part of this work, \mm{we first} introduce  the  set of the admissible unknown compactly supported coefficients $q$.  For any  $\alpha\in(0,1)$ and $M\!>\!0$, we define an admissible set of $q$ by
\begin{eqnarray*}
\mathcal{Q}:=\mm{\Big\{} q: & \mbox{supp}(q)=D\subset B_R \ \mbox{is a convex polygon,}  ~q(\mathcal{O})\neq 0 \ \mbox{at each corner $\mathcal{O}$ of $\PD D$},  \\
& q\in \mathcal{C}^{0,\alpha}(\overline{B_{\varepsilon}(\mathcal{O})\cap D})
\ \mbox{for some $\epsilon>0$}
\mm{\Big\}}.
\end{eqnarray*}
%
\mm{Then we can state the uniqueness result for the inverse problem related to the Schr\"odinger equation below.}
\begin{theorem}\label{THM2}
\mm{Let $u_0\in \mathcal{C}_{0}^\infty(E)$,
$(D_i, q_i)$ ($i=1,\,2$)} be two pairs of convex polygonal obstacles $D_i$  and potentials $q_i\!\in\! \mathcal{Q}$,
\mm{and $u_i$ ($i=1,\,2$) be the solutions to the initial value problems (\ref{Eq1.1}) and (\ref{initial-Schrodinger})
with $q$ replaced by $q_i$.}
If
\begin{equation}\label{data}
u_1(x,t)=u_2(x,t)\qquad \mbox{on}\,\,\Gamma_R\times \R_+,
\end{equation}
then $D_1=D_2$. As a consequence,  \mm{under the following additional assertions:}
\begin{itemize}
\color{black} \item[i)] The coefficients $q_j\in S(A,b)$ for $j=1,\,2$ \tcr {where $A$ and $b$ are analytic functions near each corner point of $\tcr{D_i}$.}
\item [ii)]\tcr{There exists a $r_0>0$ such that $\displaystyle\int_{0}^{2\pi} u_0(r_0\cos\theta,r_0\sin\theta)\,d\theta\neq 0\,,$}\color{black}
\end{itemize}
the relation (\ref{data}) implies that $q_1=q_2$ in $D$.
\end{theorem}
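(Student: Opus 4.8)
The plan is to reduce the time-dependent problem to a one-parameter family of stationary scattering problems by the Laplace transform, and then to exploit the corners of $D_1$ and $D_2$. Writing $\widehat{u}_i(x,s)=\int_0^\infty e^{-st}u_i(x,t)\,\mathrm{d}t$ for $\mathrm{Re}\,s>0$ (the transform converges and is holomorphic in $s$ by the energy identity \eqref{energy consevation}), the problem \eqref{Eq1.1}--\eqref{initial-Schrodinger} becomes
\[
\Delta \widehat{u}_i+(\mathrm{i}s+q_i)\widehat{u}_i=\mathrm{i}\,u_0 \qquad \text{in } \R^2 .
\]
Since $\mathrm{Im}(\mathrm{i}s)=\mathrm{Re}\,s>0$, this is a Helmholtz-type equation whose outgoing Green's function decays exponentially, so the exterior problem is uniquely solvable and $\widehat{u}_i(\cdot,s)$ decays at spatial infinity. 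Relation \eqref{data} gives $\widehat{u}_1=\widehat{u}_2$ on $\Gamma_R$ for every such $s$. As $u_0$ is supported in $E\subset B_R\setminus\overline{D_1\cup D_2}$ and $q_i\equiv 0$ off $D_i$, the difference $w:=\widehat{u}_1-\widehat{u}_2$ is a decaying solution of $\Delta w+\mathrm{i}s\,w=0$ outside $B_R$ vanishing on $\Gamma_R$; exterior uniqueness forces $w\equiv 0$ there, hence $w$ has vanishing Cauchy data on $\Gamma_R$, and unique continuation propagates $w\equiv 0$ throughout the unbounded component of $\R^2\setminus\overline{D_1\cup D_2}$. In particular $\widehat{u}_1=\widehat{u}_2$ with matching Cauchy data across the obstacle boundaries.

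For the shape statement I would argue by contradiction. If $D_1\neq D_2$, convexity lets me pick a corner $\mathcal{O}$ of, say, $\partial D_1$ lying outside $\overline{D_2}$, and a ball $B_\epsilon(\mathcal{O})$ meeting neither $\overline{D_2}$ nor $E$. There $\widehat{u}_2$ solves $\Delta\widehat{u}_2+\mathrm{i}s\,\widehat{u}_2=0$ and is real-analytic, while $w$ solves $\Delta w+\mathrm{i}s\,w=-q_1\widehat{u}_1$ in the sector $D_1\cap B_\epsilon(\mathcal{O})$ with vanishing Cauchy data on the two edges meeting at $\mathcal{O}$ (the traces match because $\widehat{u}_1\in H^2_{\mathrm{loc}}$ and $\widehat{u}_1=\widehat{u}_2$ outside $D_1$). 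This is exactly the corner configuration to which the shape-identification machinery of \cite{EH2018,HLM} applies: a H\"older-continuous source against a convex corner carrying zero Cauchy data must vanish at the vertex, i.e. $q_1(\mathcal{O})\,\widehat{u}_1(\mathcal{O},s)=0$. Since $q_1(\mathcal{O})\neq 0$ by $q_1\in\mathcal{Q}$, this gives $\widehat{u}_1(\mathcal{O},s)=\widehat{u}_2(\mathcal{O},s)=0$ for all $s$ with $\mathrm{Re}\,s>0$. It then remains to exhibit a single frequency at which $\widehat{u}_2(\mathcal{O},s)\neq 0$ (Lemma \ref{Lemma about uniqueness for source term}); the resulting contradiction yields $D_1=D_2=:D$.

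For the coefficient statement, with $D_1=D_2=D$ the two transforms share their exterior and their Cauchy data on $\partial D$, while inside $D$ each solves $\Delta\widehat{u}_i+(\mathrm{i}s+q_i)\widehat{u}_i=0$. Green's identity over $D$ then kills all boundary terms and yields $\int_D (q_1-q_2)\,\widehat{u}_1(\cdot,s)\widehat{u}_2(\cdot,s)\,\mathrm{d}x=0$ for every $s$; equivalently, repeating the corner analysis for $w$ with source $-(q_1-q_2)\widehat{u}_2$ forces $(q_1(\mathcal{O})-q_2(\mathcal{O}))\,\widehat{u}_2(\mathcal{O},s)=0$ at each vertex. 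To turn this into global equality I would invoke the high-frequency asymptotics of Lemma \ref{Lem:high}: as $s\to\infty$ the value $\widehat{u}_i(\mathcal{O},s)$ is governed by the Green's function spreading $u_0$ to $\mathcal{O}$, whose leading non-vanishing contribution is controlled precisely by the circular mean $\int_0^{2\pi}u_0(r_0\cos\theta,r_0\sin\theta)\,\mathrm{d}\theta\neq 0$ of hypothesis (ii). This guarantees $\widehat{u}_2(\mathcal{O},s)\neq 0$ for all large $s$ and, exploiting the whole family of frequencies, determines $q_1-q_2$ to infinite order at each vertex. Finally, condition (i) makes $q_1-q_2\in S(A,b)$ real-analytic near the corners, so infinite-order vanishing forces $q_1-q_2\equiv 0$ near a vertex; unique continuation for the governing equation $\Delta v+A\cdot\nabla v+bv=0$ defining $S(A,b)$ then gives $q_1=q_2$ throughout $D$.

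The main obstacle is the high-frequency asymptotic analysis behind Lemma \ref{Lem:high}. Because $u_0$ is supported away from the vertex, the naive leading term of $\widehat{u}_i(\mathcal{O},s)$ vanishes at $\mathcal{O}$, so one must track the subleading, exponentially small contribution of the complex-wavenumber Green's function (with $k^2=\mathrm{i}s$ and $k\to\infty$ along a ray off the real axis) and prove it does not cancel --- this is exactly where the circular-mean hypothesis (ii) is essential. Secondary technical points are the justification of the holomorphy and spatial decay of $\widehat{u}_i$ despite the merely $L^2$-conserved, non-decaying Schr\"odinger dynamics, and the corner-regularity estimates for $w$ in the presence of the potential $q_i$.
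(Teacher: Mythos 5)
Your overall architecture --- Laplace transform in time, exterior uniqueness on $\Gamma_R$, unique continuation, corner analysis at a vertex of $\partial D_1\setminus\partial D_2$, and a non-vanishing lemma at one frequency --- matches the paper's. However, your shape argument has a genuine gap: it proves less than the theorem asserts. From the corner you extract only the vertex identity $q_1(\mathcal{O})\,\widehat{u}_1(\mathcal{O},s)=0$, so to reach a contradiction you must exhibit a frequency with $\widehat{u}_2(\mathcal{O},s)\neq 0$. The only available mechanism for that is Lemma \ref{Lem:high} (your citation of Lemma \ref{Lemma about uniqueness for source term} is the wrong lemma --- it asserts vanishing of a source term, not non-vanishing of a solution), and the proof of Lemma \ref{Lem:high} relies precisely on hypothesis ii), the circular-mean condition on $u_0$. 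But the theorem claims $D_1=D_2$ from the data alone; hypotheses i)--ii) are assumed only for the coefficient statement. The paper avoids this by invoking the full strength of \cite[Lemma 1]{EH2018}: since $q_1(\mathcal{O})+is\neq is$, the matching Cauchy data on $\Gamma$ force $\widehat{u}_1=\widehat{u}_2\equiv 0$ in all of $B_\varepsilon(\mathcal{O})$, not merely at the vertex; unique continuation then gives $\widehat{u}_1\equiv 0$ in $B_R\backslash\overline{D_1\cup E}$, and pairing the equation \eqref{*} over a neighborhood $E^*$ of $E$ against the solutions $v=i\,e^{-i\sqrt{s}\,x\cdot\theta}$ of $\Delta v+isv=0$ yields $\mathcal{F}(u_0)(\sqrt{s}\,\theta)=0$ for all $\theta\in\Sb$ and $s>0$, i.e.\ $u_0\equiv 0$ --- a contradiction requiring nothing beyond $u_0\not\equiv 0$.

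There is a second gap in your coefficient step: the passage from the vertex identity $(q_1-q_2)(\mathcal{O})=0$ to $q_1=q_2$ near $\mathcal{O}$ is unsupported. Your claim that ``exploiting the whole family of frequencies determines $q_1-q_2$ to infinite order at each vertex'' does not follow: changing $s$ changes the equation and the factor $\widehat{u}_2(\cdot,s)$, but the corner analysis at each fixed $s$ returns the same zeroth-order information, namely vanishing at $\mathcal{O}$; no higher-order vanishing is produced this way. The paper needs only one frequency: fix $s_0$ large with $\widehat{u}_2(\mathcal{O},s_0)\neq 0$ (Lemma \ref{Lem:high}, using ii)), note that $h:=q_1-q_2\in S(A,b)$ by i) and linearity, and apply Lemma \ref{Lemma about uniqueness for source term} with $f=\widehat{u}_2(\cdot,s_0)$: the $S(A,b)$ structure (harmonic lowest-order expansion, via Lemma 2.3 of \cite{HLM}) forces $h\equiv 0$ throughout the corner sector, after which unique continuation for $\Delta v+A\cdot\nabla v+bv=0$ gives $q_1=q_2$ in $D$. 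In short, you have the right ingredients but deploy them in the wrong places: \cite[Lemma 1]{EH2018}, not a non-vanishing lemma, carries the shape step, and Lemma \ref{Lemma about uniqueness for source term}, not a multi-frequency argument, carries the coefficient step.
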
\label{thm:helmholtz}

\indent The above result claims the unique determination of the  \textit{obstacle} $D$ from the boundary measurement data $u_{|\Gamma_R\times\R_{+}}$ and  the recovery of the coefficient $q$ \mm{under some additional conditions.}

\subsection{\mm{Inverse Helmholtz problem}}
\mm{The main uniqueness result we will establish for the inverse Helmholtz problem can be stated in the following theorem.}

\begin{theorem}\label{Main theorem time-harmonic} \tcr{Let $k>0$ be fixed}, and
$u_{i}$ for $i=1,2$ be the solution to \eqref{Equation for v time-harmonic} \mm{when $n$ is replaced by $n_{i}$.} Suppose that
	\begin{itemize}
		\item[(i)]$n_{i}\in L^\infty(\R^2)$, $D_{i}=\mbox{supp}(n_{i}-1)$ is a convex polygon and $n_{i}\equiv 1$ in $\R^2\backslash\overline{D_{i}}$.
		\item[(ii)] $n_{i}(x)=V_{i}(x)|_{\overline{D_{i}}}$ for all $x\in \overline{D_{i}}$, where $V_{i}\in S(A, b)$ for some functions $A$ and $b$ which are analytic near each corner point of $D_{i}$, and $n_{i}(\mathcal{O})\neq 1$ for each corner point of $\partial D_{i}$.
		\item[(iii)] $\lvert u_{i}(\mathcal{O})\rvert>0$  for each corner point $\mathcal{O}$  of $\partial D_{i}$.
	\end{itemize}
	\mm{Then the equality} $u_{1}^{\infty}(\widehat{x})=u_{2}^{\infty}(\widehat{x})$ for all $\widehat{x}\in \tcr{\Sb}$ implies that $D_{1}=D_{2}$ and $n_{1}=n_{2}$.
\end{theorem}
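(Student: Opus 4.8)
The plan is to run, at the single fixed frequency $k$, the corner-scattering argument that underlies the analogous wave result Theorem \ref{Main Theorem wave}, the only difference being that the nonvanishing of the field at the corners is now supplied directly by hypothesis (iii) rather than extracted through a Laplace transform. Write $u_i=u^{in}+u_i^{sc}$ for the total field solving \eqref{Equation for v time-harmonic} with $n=n_i$. First I would invoke Rellich's lemma: since $u_1^{\infty}=u_2^{\infty}$ on all of $\Sb$ and each $u_i^{sc}$ satisfies the Sommerfeld radiation condition together with the homogeneous Helmholtz equation in $|x|>R$, the two scattered fields agree outside $B_R$. As the incident field is common to both problems and each $u_i$ solves $\Delta u_i+k^2 u_i=0$ in the unbounded connected component $G$ of $\R^2\backslash\overline{D_1\cup D_2}$ (where $n_1=n_2=1$), weak unique continuation propagates this equality to give $u_1=u_2$ in $G$.

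Next I would establish $D_1=D_2$ by contradiction. If the two convex polygons differ, a standard geometric observation (cf.\ \cite{EH2018}) produces a corner $\mathcal{O}$ of, say, $\PD D_1$ lying on $\PD G$ with $\mathcal{O}\notin\overline{D_2}$. Choosing $\epsilon>0$ so small that $B_{\epsilon}(\mathcal{O})\cap\overline{D_2}=\emptyset$, the field $u_2$ solves the homogeneous Helmholtz equation in $B_{\epsilon}(\mathcal{O})$ and is therefore real-analytic there, while $u_1=u_2$ in the exterior wedge $B_{\epsilon}(\mathcal{O})\backslash\overline{D_1}$. Setting $w:=u_1-u_2$ on the interior wedge $B_{\epsilon}(\mathcal{O})\cap D_1$, the $H^2$-transmission conditions across the two edges give $w=\PD_\nu w=0$ on $\PD D_1\cap B_{\epsilon}(\mathcal{O})$, and a direct computation yields $\Delta w+k^2 n_1 w=-k^2(n_1-1)u_2$ in the wedge. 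Since hypothesis (ii) forces $n_1=V_1\in S(A,b)$ with $A,b$ analytic near $\mathcal{O}$, the coefficient $n_1$ is analytic and in particular H\"older continuous up to the corner, so the shape identification theory of \cite{EH2018, HLM} applies and forces the source to vanish at the corner, i.e.\ $(n_1(\mathcal{O})-1)\,u_2(\mathcal{O})=0$. But $u_2(\mathcal{O})=u_1(\mathcal{O})\neq 0$ by continuity and hypothesis (iii), while $n_1(\mathcal{O})\neq 1$ by hypothesis (ii); this contradiction yields $D_1=D_2=:D$.

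Finally I would recover the coefficient. With $D_1=D_2=D$ and $u_1=u_2$ in $G$, the transmission conditions now give $u_1=u_2$ and $\PD_\nu u_1=\PD_\nu u_2$ on all of $\PD D$, so $w:=u_1-u_2$ satisfies $\Delta w+k^2 n_1 w=-k^2(n_1-n_2)u_2$ in $D$ with vanishing Cauchy data on $\PD D$. Put $m:=n_1-n_2=V_1-V_2$; by linearity $m\in S(A,b)$, and since $A,b$ are analytic near each corner, $m$ is real-analytic there. Applying the corner analysis of \cite{EH2018, HLM} at a corner $\mathcal{O}$ of $\PD D$ now forces the right-hand side $m\,u_2$ to vanish to infinite order at $\mathcal{O}$; as $u_2(\mathcal{O})\neq 0$, the analytic function $m$ vanishes to infinite order at $\mathcal{O}$ and hence identically in a neighborhood of $\mathcal{O}$. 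Weak unique continuation for $\Delta m+A\cdot\n m+b m=0$ on the connected set $B_R$ then gives $m\equiv 0$ in $B_R$, so that $n_1=V_1=V_2=n_2$ on $\overline{D}$ and, trivially, $n_1=n_2=1$ outside, whence $n_1=n_2$. The step I expect to be most delicate is precisely the passage from mere vanishing of $m\,u_2$ at the corner to its vanishing to infinite order: this is what activates the analytic structure of $S(A,b)$ and requires carefully adapting the corner-scattering lemmas of \cite{EH2018, HLM} to the penetrable transmission setting, the very mechanism that this fixed-frequency problem shares with the Laplace-transformed problems treated in Theorems \ref{Main Theorem wave} and \ref{THM2}.
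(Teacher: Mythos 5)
Your proposal is correct and follows the same overall strategy as the paper: Rellich's lemma and unique continuation transfer the far-field identity to an identity of Cauchy data on a corner of $\PD D_1$, the corner analysis of \cite{EH2018,HLM} handles both the shape and the coefficient steps, and weak unique continuation for $V_1-V_2\in S(A,b)$ finishes the coefficient recovery. The one place where you genuinely deviate is the shape step. The paper applies its Lemma \ref{Lemma about uniqueness for source term} with the roles $h=u_2$ and $f=k^2(1-n_1)$: since $u_2$ solves the homogeneous Helmholtz equation near $\mathcal{O}$, it lies in $S(0,k^2)$, and $f(\mathcal{O})\neq 0$ by hypothesis (ii), so the lemma yields $u_2\equiv 0$ in the wedge; unique continuation then forces $u_2\equiv 0$ in all of $\R^2\backslash\overline{D_2}$, which is impossible because the total field contains a nontrivial incident wave. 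In particular, the paper's shape step never uses hypothesis (iii). You instead invoke only the weaker corner conclusion that the H\"older source $-k^2(n_1-1)u_2$ vanishes at the corner point (which does follow from the same machinery, e.g.\ by taking $h\equiv 1$ in Lemma \ref{Lemma about uniqueness for source term} and arguing by contraposition), and you close the contradiction with hypothesis (iii) together with the continuity matching $u_2(\mathcal{O})=u_1(\mathcal{O})$ across the common exterior region. Both routes are sound: the paper's is more economical with the hypotheses, showing that (iii) is needed only for recovering $n$, while yours stays local, needs only pointwise vanishing of the source at the corner, and is in line with how condition (iii) is used in \cite{Ikehata_Reconstruction_source}, as the paper's own remark notes. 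Your coefficient step --- infinite-order vanishing of $m\,u_2$ at the corner via the $S(A,b)$ structure, hence $m\equiv 0$ near $\mathcal{O}$, then unique continuation --- is exactly what Lemma \ref{Lemma about uniqueness for source term} packages (its proof iterates the harmonic-leading-term argument of \cite{HLM}), so the ``delicate step'' you flag is precisely the content of that lemma rather than an additional gap.
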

\begin{remark}\tcr{In the time-harmonic regime,
the condition (iii) has already been used in \cite{Ikehata_Reconstruction_source} to prove uniqueness in recovering the support of the contrast function. For linear inverse source problems, the condition (ii) guarantees the unique identification of a source term having a convex-polygonal support (see \cite{HLM}).
\mm{The above Theorem\,\ref{thm:helmholtz}} verifies that both the support and the refractive index can be uniquely
identified under the conditions (i)-(iii).}
\end{remark}

\section{Preliminaries}\label{subsec:well-posedness wave}

\mm{An important idea in establishing our main results in this work is to transform the time-dependent problems
into the equivalent frequency dependent problems with the help of the Laplace transform.
The Laplace transform of a function $u$ of time variable $t$ is given  by}
\begin{equation}\label{Definition of Laplace transform}
\hat{u}(x,s):=\int_{0}^\infty e^{-st}\, u(x,t)\,dt,\quad  \mm{{s\in \C}, \,{\real{s}}>0,\,\,x\in\R^3.}
\end{equation}
Our goal in this section is to study the {\it{long time behavior}} of the solutions to the wave and Schr\"odinger equations,  in order to justify the use of the Laplace transform. \tcr{It is well known that such kind of behaviors can be derived from classical energy estimates, which will be presented below for the self-consistence of our arguments. Our emphasis will be placed upon the interpretation of the Laplace transform of $u$ in (\ref{Definition of Laplace transform}).}

\subsection{Long time behavior \mm{of solutions to} the wave equation}
\begin{lemma}\label{Lem:longtime}
	Let $F\in C^{\infty}_{c}\left(\R^2\times\R_{+} \right)$ \mm{satisfy
	that $\sup_{t\in[0,\infty)}\lVert\partial_{t}^{k}F(t,\cdot)\rVert_{L^{2}(\R^{+})}\leq C_{k}$
	for some constant  $C_{k}>0$ independent of $t$,} and let $u$ \tcr{be} a solution to the initial value problem
	\begin{align}\label{Equation of interest linear wave}
	\begin{aligned}
	\begin{cases}
	\frac{1}{c^{2}(x)}\partial_{t}^{2}u(x,t)-\Delta_{x} u(x,t)=F(x,t),\ \ (x,t)\in \R^2\times\R^{+}\\
	u(x,0)=\partial_{t}u(x,0)=0,\ \ x\in\mathbb{R}^{2}.\\
	\end{cases}
	\end{aligned}
	\end{align}
	\mm{Then the solution $u$ has the asymptotic estimate}
	\begin{align}\label{Estimate for Sobolev norm for u2}
	\Vert u(\cdot,t)\rVert_{L^{2}(\R^{2})}=O(t^{2})\quad\tcr{\mbox{as}\quad t\rightarrow \infty,}
	\end{align}
	\mm{leading to the well-definedness of the Laplace transform of $u$}
	for all $x\in \R^{2}$. \mm{Moreover, the following estimate holds:}
	\begin{align}\label{Estimate for Sobolev norm of Laplace transform for u}
	s^{3}\lVert \widehat{u}(\cdot,s)\rVert_{L^{2}(\R^{2})}\leq \C,
	\quad s>0,
	\end{align}
	for some constant $C>0$ independent of $s$.
	\begin{proof}
		Multiplying \tcr{the} first equation in \eqref{Equation of interest linear wave} by $2\partial_{t}u(x,t)$ and integrating over $(0,t)\times\R^{2}$, we get
		\begin{align*}
		\begin{aligned}
		&\int\limits_{\R^{2}}\left(\frac{1}{c^{2}(x)}\lvert\partial_{t}u(x,t)\rvert^{2}+\lvert\nabla_{x}u(x,t)\rvert^{2}\right)dx=2\int\limits_{0}^{t}\int\limits_{\R^{2}}F(x,s)\PD_{t}u(x,s)\D x\D s.\\
		\end{aligned}
		\end{align*}
\mm{By using the Cauchy-Schwartz inequality, the hypothesis \mm{on} $F(x,t)$, along with
the fact that  $0< c(x)\leq c$ for some constant $c>0$,  we obtain}
		\begin{align*}
		\begin{aligned}
		\tcr{\mathcal{E}}(t)&:=\int\limits_{\R^{2}}\lb\lvert\partial_{t}u(x,t)\rvert^{2}+\lvert\nabla_{x}u(x,t)\rvert^{2}\rb \D x\leq C\int\limits_{0}^{t}\lVert F(.,s)\rVert_{L^{2}(\R^{2})}\lVert\PD_{t}u(.,s)\rVert_{L^{2}(\R^{2})} \tcr{\D s}\\
		&\leq C\lVert F\rVert_{L^{\infty}\lb 0,\infty;L^{2}(\R^{2})\rb}\int\limits_{0}^{t}\tcr{\mathcal{E}}(s)^{1/2}\D s\leq C \int\limits_{0}^{t}\tcr{\mathcal{E}}(s)^{1/2}\D s
		\end{aligned}
		\end{align*}
		\mm{for some} constant $C>0$, depending only on $c(x)$ and $F(x,t)$.
		Now let us define
		\begin{align*}
		\phi(T):=\max_{0\leq t\leq T}\tcr{\mathcal{E}}(t).
		\end{align*}
		\mm{Using this, we derive from the above estimate that}
		\begin{align*}
		\phi(T)\leq C\max_{0\leq t\leq T}\int\limits_{0}^{t}\tcr{\mathcal{E}}(s)^{1/2}\D s\leq C\int\limits_{0}^{T}\tcr{\mathcal{E}}(s)^{1/2}\D s\leq C \int\limits_{0}^{T}\max_{0\leq s\leq T}\tcr{\mathcal{E}}(s)^{1/2}\D s=C \phi(T)^{1/2}T.
		\end{align*}
		This gives
		\[\tcr{\mathcal{E}}(T)\leq \phi(T)\leq CT^{2},\ \mbox{for any} \ 0\leq T<\infty.\]
	Also
		\begin{align*}
		\lVert u(\cdot,t)\rVert_{L^{2}(\R^{2})}=\Bigg\lVert\int\limits_{0}^{t}\partial_{t}u(\cdot,s)\D s\Bigg\rVert_{L^{2}(\R^{2})}\leq \int\limits_{0}^{t}\lVert\PD_{t}u(\cdot,s)\rVert_{L^{2}(\R^{2})}\D s\leq Ct^{2}.
		\end{align*}
		\mm{This proves \eqref{Estimate for Sobolev norm for u2}.} Combining the estimate of $\tcr{\mathcal{E}}(t)$ together with Equation \eqref{Estimate for Sobolev norm for u2}, we get that
		\begin{align}\label{H1 estimate on u}\lVert u(.,t)\rVert_{H^{1}(\R^{2})}\leq C t,\  \mbox{for some constant $C>0$ independent of $t$}.\end{align}

Next we will show that the Laplace transform defined in Equation \eqref{Definition of Laplace transform} makes sense for all $x\in\R^{2}$. To show this, \mm{it suffices} to prove that
		\begin{align*}
		\begin{aligned}
		\tcr{\lVert \widehat{u}(\cdot;s)\rVert_{L^{2}(\R^{2})}=}\Bigg\lVert\int\limits_{0}^{\infty}e^{-s t}u(\cdot,t)dt\Bigg\rVert_{L^{2}(\R^{2})}
		\end{aligned}
		\end{align*}
		is finite. \mm{By} using the Minkowskii inequality for inetgrals  (\cite{Folland_Measure_Theory}, page 194), we have
		\begin{align*}
		\begin{aligned}
		\Bigg\lVert\int\limits_{0}^{\infty}e^{-s t}u(x,t)dt\Bigg\rVert_{L^{2}(\R^{2})}&\leq \int\limits_{0}^{\infty}e^{-s t}\lVert u(\cdot,t)\rVert_{L^{2}(\R^{2})} dt.
		\end{aligned}
		\end{align*}
		Now using Equation \eqref{Estimate for Sobolev norm for u2}, we further derive
		\begin{equation*}
		\begin{aligned}
		\lVert \widehat{u}(\cdot;s)\rVert_{L^{2}(\R^{2})}&=\left\lVert \int\limits_{0}^{\infty}e^{-s t}u(x,t)\D t\right\rVert_{L^{2}(\R^{2})}\leq \int\limits_{0}^{\infty}e^{-s t}\left \lVert \widehat{u}(\cdot,t)\right\rVert_{L^{2}(\R^{2})}\leq C\int\limits_{0}^{\infty}e^{-s t}t^{2}\D t\leq \frac{C}{s^{3}}.
		\end{aligned}
		\end{equation*}
Thus
		\[\int\limits_{0}^{\infty}e^{-s t}u(x,t)\D t\in L^{2}(\R^{2}),\]
		which implies that
		\[\widehat{u}(x;s)=\int\limits_{0}^{\infty}e^{-s t}u(x,t)\D t \] exists  for almost every  $x\in\R^{2}$.		This completes the proof of \mm{Lemma\,\ref{Lem:longtime}.}
	\end{proof}
\end{lemma}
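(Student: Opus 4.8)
The plan is to prove all three assertions by a single energy estimate for the wave equation, upgraded to an integral inequality of Gr\"onwall type, and then to transfer the resulting polynomial growth bound to the Laplace transform through Minkowski's integral inequality.

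First I would test the equation against $2\partial_{t}u$ and integrate over $\R^{2}$. The term $\tfrac{2}{c^{2}}\partial_{t}^{2}u\,\partial_{t}u$ is an exact time derivative $\partial_{t}\big(\tfrac{1}{c^{2}}|\partial_{t}u|^{2}\big)$, and integration by parts turns $-2\Delta u\,\partial_{t}u$ into $\partial_{t}|\nabla u|^{2}$ with no boundary contribution, since finite speed of propagation together with the compact spatial support of $F$ forces $u(\cdot,t)$ to stay compactly supported for each fixed $t$. Integrating in time and using the homogeneous initial data then yields the identity
\[
\int_{\R^{2}}\Big(\tfrac{1}{c^{2}(x)}|\partial_{t}u|^{2}+|\nabla u|^{2}\Big)\,dx
=2\int_{0}^{t}\!\int_{\R^{2}}F\,\partial_{t}u\,dx\,ds.
\]
Bounding the right-hand side by Cauchy--Schwarz, using $0<c(x)\le c$ and the uniform hypothesis $\|F(\cdot,s)\|_{L^{2}(\R^{2})}\le C_{0}$, and writing $\mathcal{E}(t):=\int_{\R^{2}}(|\partial_{t}u|^{2}+|\nabla u|^{2})\,dx$, I would arrive at the nonlinear integral inequality $\mathcal{E}(t)\le C\int_{0}^{t}\mathcal{E}(s)^{1/2}\,ds$.

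The second step is the Gr\"onwall argument. To avoid differentiability issues I would pass to $\phi(T):=\max_{0\le t\le T}\mathcal{E}(t)$; the inequality above gives $\phi(T)\le C\phi(T)^{1/2}T$, and hence $\mathcal{E}(T)\le CT^{2}$. In particular $\|\partial_{t}u(\cdot,t)\|_{L^{2}(\R^{2})}\le Ct$, so writing $u(\cdot,t)=\int_{0}^{t}\partial_{t}u(\cdot,s)\,ds$ and applying the triangle inequality yields $\|u(\cdot,t)\|_{L^{2}(\R^{2})}\le Ct^{2}$, which is \eqref{Estimate for Sobolev norm for u2}. For the Laplace transform I would then invoke Minkowski's integral inequality to move the $L^{2}$ norm inside the time integral, obtaining
\[
\|\widehat{u}(\cdot,s)\|_{L^{2}(\R^{2})}
\le\int_{0}^{\infty}e^{-st}\,\|u(\cdot,t)\|_{L^{2}(\R^{2})}\,dt
\le C\int_{0}^{\infty}e^{-st}t^{2}\,dt=\frac{C}{s^{3}},
\]
which is finite for every $s>0$; this simultaneously establishes the well-definedness of $\widehat{u}(\cdot,s)$ as an $L^{2}$ function and the bound \eqref{Estimate for Sobolev norm of Laplace transform for u}.

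I expect the main subtlety to lie in justifying the vanishing of the boundary term in the energy identity and, relatedly, the finiteness of the spatial integrals: one must know that $u(\cdot,t)$ and $\nabla u(\cdot,t)$ genuinely lie in $L^{2}(\R^{2})$ and decay at infinity so that the divergence theorem applies cleanly. This is guaranteed by finite speed of propagation, the support of $u(\cdot,t)$ growing at most linearly in $t$ around the support of $F$, but it deserves an explicit remark. The other point requiring slight care is the nonlinear Gr\"onwall step, where squaring and the passage to the maximum function $\phi$ are what make the quadratic growth $\mathcal{E}(T)=O(T^{2})$ fall out; a direct differential-inequality approach would instead demand a priori regularity of $t\mapsto\mathcal{E}(t)$ that the maximum-function trick sidesteps.
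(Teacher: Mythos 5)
Your proposal is correct and follows essentially the same route as the paper's own proof: the same energy identity obtained by testing with $2\partial_{t}u$, the same maximum-function Gr\"onwall trick via $\phi(T):=\max_{0\le t\le T}\mathcal{E}(t)$ yielding $\mathcal{E}(T)\le CT^{2}$, the fundamental-theorem-of-calculus step for $\|u(\cdot,t)\|_{L^{2}}=O(t^{2})$, and Minkowski's integral inequality for the bound $s^{3}\|\widehat{u}(\cdot,s)\|_{L^{2}(\R^{2})}\le C$. Your added remark on finite speed of propagation to justify the vanishing boundary terms is a sensible point of care that the paper leaves implicit, but it does not change the argument.
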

\begin{lemma}\label{Lem:neu}
Let $u(x,t)$ be the solution to \eqref{wave-equation}-\eqref{Initial-wave} when $c\in \mathcal{C}_{D}$, \mm{$f$ and $g$ are
two functions as given} in Theorem \ref{Main Theorem wave}.
Then the relation \mm{that} $u(x,t)=0$ for $(x,t)\in\Gamma_R\times \R_{+}$ implies $\partial_\nu u(x, t)=0$ for  $(x,t)\in \Gamma_R\times \R_{+}$.
\begin{proof}
	\mm{It is easy to see that Equations \eqref{wave-equation}-\eqref{Initial-wave} reduces to the
	following equations in $\lb\R^{2}\backslash B_{R}\rb\times (0,T)$:}
	\begin{equation}\label{Wave-equation outside BR}
	\begin{aligned}
	\begin{cases}
	\PD_{t}^{2}u(x,t)-\Delta_{x}u(x,t)=0, \ (x,t)\in\lb\R^{2}\backslash B_{R}\rb\times \R^{+}\\
	u(x,0)=\PD_{t}u(x,0)=0, \ \ \ \ x\in\R^{2}\backslash B_{R}.
	\end{cases}
	\end{aligned}
	\end{equation}
	Now after multiplying \eqref{Wave-equation outside BR} by $2\PD_{t}u(x,t)$ and integrating over $\lb\R^{2}\backslash B_{R}\rb\times\lb0,t\rb$, we get
	\begin{align*}
	\int\limits_{\R^{2}\backslash B_{R}}\lb \lvert\PD_{t}u(x,t)\rvert^{2}+\lvert\n_{x}u(x,t)\rvert^{2}\rb\D x=0, \ \mbox{for any $t\in\R^{+}.$}
	\end{align*}
	This gives
	\[u(x,t)=0, \ \mbox{for $\lb x,t\rb \in \lb\R^{2}\backslash B_{R}\rb\times\R^{+}$.}\]
Now using the fact that solution   $u\in C^{\infty}\lb \lb\R^{2}\backslash{B_{R}}\rb\times \R_{+}\rb$, we conclude that $\PD_{\nu}u(x,t)=0$ for any for $\lb x,t\rb\in \Gamma_{R}\times\R_{+}.$ This complete \mm{the proof of Lemma\,\ref{Lem:neu}.}
	\end{proof}
\end{lemma}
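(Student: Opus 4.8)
The plan is to exploit that, outside the ball $B_{R}$, the sound speed is identically $1$ and the source vanishes, so that $u$ satisfies the \emph{free} wave equation there; the claim then reduces to a uniqueness statement for the exterior initial–boundary value problem, which I would settle by the same energy identity already used in the proof of Lemma~\ref{Lem:longtime}.

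First I would record that, since $D=\mathrm{supp}(1-c)\subset B_{R}$ and $\mathrm{supp}(f)\subset E\subset B_{R}$, we have $c\equiv 1$ and $f\equiv 0$ on $\R^{2}\backslash B_{R}$. Hence $u$ solves
\[
\begin{cases}
\PD_{t}^{2}u-\Delta_{x}u=0 & \text{in }(\R^{2}\backslash B_{R})\times\R_{+},\\
u(\cdot,0)=\PD_{t}u(\cdot,0)=0 & \text{in }\R^{2}\backslash B_{R},
\end{cases}
\]
supplemented by the homogeneous Dirichlet datum $u=0$ on $\Gamma_{R}\times\R_{+}$ provided by the hypothesis. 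I would also invoke finite speed of propagation: as $f$ and the initial data are compactly supported, $u(\cdot,t)$ has compact support for each fixed $t$, so the spatial integrals below converge and no boundary contribution arises at infinity.

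Next I would multiply the exterior equation by $2\PD_{t}u$ and integrate over $(\R^{2}\backslash B_{R})\times(0,t)$. Writing $2\PD_{t}u\,\Delta_{x}u=2\,\n_{x}\!\cdot(\PD_{t}u\,\n_{x}u)-\PD_{t}\lvert\n_{x}u\rvert^{2}$ and applying the divergence theorem in space, the bulk terms assemble (using the vanishing initial data) into the energy $\Ec(t):=\int_{\R^{2}\backslash B_{R}}(\lvert\PD_{t}u\rvert^{2}+\lvert\n_{x}u\rvert^{2})\,\D x$, while the only surviving boundary term is
\[
\Ec(t)=2\int_{0}^{t}\!\int_{\Gamma_{R}}\PD_{\tau}u\,\PD_{\nu}u\,\D S\,\D\tau.
\]
The crucial observation is that this boundary integral vanishes: differentiating the identity $u(x,\tau)=0$, valid for all $x\in\Gamma_{R}$ and $\tau>0$, along the time direction (which is tangent to $\Gamma_{R}\times\R_{+}$) gives $\PD_{\tau}u=0$ on $\Gamma_{R}$. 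Therefore $\Ec(t)=0$ for every $t>0$, and this forces $u\equiv 0$ in $(\R^{2}\backslash B_{R})\times\R_{+}$.

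Finally, since the equation reduces to the free wave equation in the closed exterior, the solution is smooth up to $\Gamma_{R}$ from outside; taking the trace of $\n_{x}u$ from that side of the identity $u\equiv 0$ yields $\PD_{\nu}u=0$ on $\Gamma_{R}\times\R_{+}$, as desired. I expect the only genuinely delicate points to be the justification of convergence and the absence of a boundary term at infinity, together with the regularity needed to integrate by parts — both dispatched by finite speed of propagation and the exterior smoothness of $u$ — whereas the heart of the argument is the tangential-derivative identity $\PD_{\tau}u|_{\Gamma_{R}}=0$ that annihilates the boundary term.
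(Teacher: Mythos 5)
Your proposal is correct and follows essentially the same route as the paper: both reduce to the free wave equation in the exterior of $B_{R}$, apply the energy identity obtained by multiplying by $2\PD_{t}u$ and integrating over $(\R^{2}\backslash B_{R})\times(0,t)$, conclude $u\equiv 0$ outside $B_{R}$, and then take the trace of the gradient to get $\PD_{\nu}u=0$ on $\Gamma_{R}$. Your write-up is in fact more careful than the paper's, making explicit the vanishing of the lateral boundary term via $\PD_{t}u|_{\Gamma_{R}}=0$ and the absence of a contribution at infinity via finite speed of propagation, both of which the paper leaves implicit.
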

\subsection{Long time behavior \mm{of solutions to} the Schr\"odinger equation}
\mm{The following lemma indicates} that $\hat{u}$ is well defined and that  $\hat{u}(\cdot,s)\in H^2(\R^2)$.
\begin{lemma}\label{prop1.2}
\mm{Suppose that $u_0\in L^2(\R^3)$ and $q\in L^\infty(\R^3)$, and} there exists $M>0$ such that
\begin{equation}\label{Eq1.4}
\|u_0\|_{L^2(\R^3)}+\|q\|_{L^\infty(\R^3)}\leq M.
\end{equation}
\mm{Then the following estimate holds}
$$\|u(\cdot,t)\|_{L^{2}(\R^3)}\leq C (1+t),$$
\mm{where $C$ is a constant depending} only on $M$. \color{black}  Moreover, we have $\hat{u}(\cdot,s)\in H^2(\R^2)$.
\end{lemma}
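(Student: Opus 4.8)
The plan is to treat the two assertions separately: first the linear-in-time growth bound on $\lVert u(\cdot,t)\rVert_{L^2(\R^2)}$, which legitimizes the Laplace integral \eqref{Definition of Laplace transform}, and then the $H^2$-regularity of $\hat u(\cdot,s)$. The growth bound I would obtain by the energy method, in the spirit of Lemma \ref{Lem:longtime}. Multiplying the equation $i\PD_t u+\Delta u+qu=0$ by $\bar u$, integrating over $\R^2$ and integrating by parts in the Laplacian term gives
\[
i\int_{\R^2}\PD_t u\,\bar u\,\D x-\int_{\R^2}\lvert\nabla u\rvert^2\,\D x+\int_{\R^2}q\,\lvert u\rvert^2\,\D x=0.
\]
Because $q$ is real-valued, the last two integrals are real, so taking imaginary parts and using $\Ima\!\big(i\int_{\R^2}\PD_t u\,\bar u\,\D x\big)=\real\!\big(\int_{\R^2}\PD_t u\,\bar u\,\D x\big)=\tfrac12\frac{d}{dt}\lVert u(\cdot,t)\rVert_{L^2}^2$ yields $\frac{d}{dt}\lVert u(\cdot,t)\rVert_{L^2(\R^2)}^2=0$. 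This is precisely the energy identity \eqref{energy consevation}, whence $\lVert u(\cdot,t)\rVert_{L^2(\R^2)}=\lVert u_0\rVert_{L^2(\R^2)}\leq M$, which is in particular bounded by $C(1+t)$ with $C=M$. Minkowski's inequality for integrals then gives, for $\real(s)>0$, the bound $\lVert\hat u(\cdot,s)\rVert_{L^2}\leq\int_0^\infty e^{-\real(s)t}\lVert u(\cdot,t)\rVert_{L^2}\,\D t\leq M/\real(s)<\infty$, so $\hat u(\cdot,s)$ is well defined in $L^2(\R^2)$.

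For the $H^2$ claim the key point is that, since the potential $q=q(x)$ does not depend on $t$, the function $w:=\PD_t u$ solves the same equation $i\PD_t w+\Delta w+qw=0$, with initial value $w(\cdot,0)=\PD_t u(\cdot,0)=i(\Delta u_0+qu_0)$; this lies in $L^2(\R^2)$ provided $u_0\in H^2(\R^2)$ (as in the standing well-posedness assumptions) and $q\in L^\infty$. Applying the $L^2$-conservation just established to $w$ gives $\lVert\PD_t u(\cdot,t)\rVert_{L^2}=\lVert\Delta u_0+qu_0\rVert_{L^2}$ for every $t$. I would then read off the second derivatives from the equation itself, rewritten as $\Delta u=-i\PD_t u-qu$, so that
\[
\lVert\Delta u(\cdot,t)\rVert_{L^2}\leq\lVert\PD_t u(\cdot,t)\rVert_{L^2}+\lVert q\rVert_{L^\infty}\lVert u(\cdot,t)\rVert_{L^2}
\]
is bounded uniformly in $t$. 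Together with the characterization $\lVert v\rVert_{H^2(\R^2)}\leq C\big(\lVert v\rVert_{L^2}+\lVert\Delta v\rVert_{L^2}\big)$ (via the Fourier transform), this produces a time-uniform bound $\lVert u(\cdot,t)\rVert_{H^2}\leq K$. A final use of Minkowski's integral inequality gives $\lVert\hat u(\cdot,s)\rVert_{H^2}\leq\int_0^\infty e^{-\real(s)t}\lVert u(\cdot,t)\rVert_{H^2}\,\D t\leq K/\real(s)<\infty$, hence $\hat u(\cdot,s)\in H^2(\R^2)$.

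I expect the $L^2$ bound to be essentially routine, as it coincides with the stated energy identity \eqref{energy consevation}. The main obstacle is the $H^2$ estimate: the second spatial derivatives must be controlled uniformly in time for the Laplace integral to converge in $H^2$, and the mechanism that achieves this is the observation that $\PD_t u$ again satisfies the Schr\"odinger equation---which is exactly where the time-independence of $q$ enters---so that $L^2$-conservation for $\PD_t u$, combined with $\Delta u=-i\PD_t u-qu$, transfers the bound to $\lVert\Delta u(\cdot,t)\rVert_{L^2}$. It is worth flagging that this argument requires $u_0\in H^2(\R^2)$ (so that $\PD_t u(\cdot,0)\in L^2$), consistent with the standing assumptions of Section \ref{Sub:Schroedinger equtaion formulation}, rather than merely $u_0\in L^2$.
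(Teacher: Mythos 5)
Your proof is correct (given the standing assumption $u_0\in H^2(\R^2)$ from Section \ref{Sub:Schroedinger equtaion formulation}), but it takes a genuinely different route from the paper's, chiefly in the $H^2$ step. For the growth bound the paper does not re-derive the conservation law: it represents $u$ by the Duhamel formula, estimates $\|u(\cdot,t)\|_{L^2}\leq \|u_0\|_{L^2}+\int_0^t\|q\,u(\cdot,\tau)\|_{L^2}\,d\tau$, and inserts the energy identity \eqref{energy consevation} inside the integral to obtain the stated $C(1+t)$ bound; your direct appeal to \eqref{energy consevation} (which you also re-derive by the multiplier argument) is simpler and in fact gives the stronger constant-in-time bound. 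The real divergence is the $H^2$ claim. The paper stays entirely in the frequency domain: having shown $\hat{u}(\cdot,s)\in L^2(\R^2)$, it applies the Laplace transform to the equation itself, obtaining $\Delta \hat{u}(\cdot,s)=i\,u_0-is\,\hat{u}(\cdot,s)-q\,\hat{u}(\cdot,s)$, whose right-hand side is manifestly in $L^2$ because $u_0\in L^2$, $\hat{u}(\cdot,s)\in L^2$ and $q\in L^\infty$; hence $\Delta\hat{u}(\cdot,s)\in L^2$ and $\hat{u}(\cdot,s)\in H^2$. This needs only the lemma's stated hypothesis $u_0\in L^2$ and never requires controlling spatial derivatives of $u(\cdot,t)$. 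Your route works instead in the time domain: since $q$ is time-independent, $\partial_t u$ solves the same equation, conservation applied to $\partial_t u$ together with $\Delta u=-i\partial_t u-qu$ yields a uniform-in-time $H^2$ bound, and Minkowski's integral inequality transfers it to $\hat{u}$. This buys more quantitative information --- the uniform bound $\|u(\cdot,t)\|_{H^2}\leq K$ and the decay $\|\hat{u}(\cdot,s)\|_{H^2}\leq K/\real s$ --- but at the cost of the extra hypothesis $u_0\in H^2$, which you correctly flag: under the lemma's literal hypothesis $u_0\in L^2(\R^2)$ alone, $u(\cdot,t)$ need not lie in $H^2$ for any $t$, so your argument would not apply, whereas the paper's does. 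Since the lemma is only ever invoked with $u_0\in \mathcal{C}_0^\infty(E)$ (Theorem \ref{THM2}), this discrepancy is harmless in context.
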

\begin{proof}
From the Duhamel formula, \mm{we can express} the solution $u$ of \tcr{the initial value problem (\ref{Eq1.1})-(\ref{initial-Schrodinger})}
\mm{in the form} 
\begin{equation}\label{Eq1.9}u(x,t)=e^{i t\Delta}\,u_0(x)+i\int_{0}^t e^{i(t-s)\Delta} \,\,u(x,s) \,q(x)\; \tcr{d s.}
\end{equation}
Therefore, we readily get
$$\|u(\cdot,t)\|_{L^{2}(\R^3)}\leq \|u_0\|_{L^2(\R^3)}+\int_{0}^t   \|u(\cdot,s) \,q\|_{L^2(\R^3)}\,ds.$$
In light of (\ref{energy consevation}) and (\ref{Eq1.4}), we get
\begin{eqnarray}\label{Eq1.10}
\|u(\cdot,t)\|_{L^2(\R^3)}\leq \|u_0\|_{L^2(\R^3)}+M t \|u_0\|_{L^2(\R^3)} \leq C (1+t).
\end{eqnarray}
 In view of (\ref{Eq1.10}), one can see  that  $\hat{u}(\cdot,s )\in L^2(\R^3)$.
Indeed, we have
$$
\|\hat{u}(x,s)\|^2_{L^2(\R^3)}\leq \int_{0}^\infty e^{-2st}\,\|u(\cdot,t)\|^2_{L^2(\R^3)}\,dt\,\,<\infty.$$
Then,
$\hat{u}(\cdot,s)$ is well defined. Moreover, \mm{by} applying the Laplace transform, one has
$$\Delta \,\hat{u}(\cdot,s)=i\,u_0(\cdot)-is\,\hat{u}(\cdot,s)-q(\cdot)\,\hat{u}(\cdot,s)\in L^2(\R^2).$$
Therefore, $\hat{u}(\cdot,s)\in H^2(\R^2)$, which  completes the proof of \mm{Lemma\,\ref{prop1.2}.}
\end{proof}

Proceeding \mm{as we did in the proof of Lemma \ref{Lem:neu}, we can come to the following claim.}
\begin{lemma}
\label{Lemma 2}
\tcr{Let $s>0$ be fixed}, and $U\in H^{2}(\R^2\backslash\overline{B}_R)$ be a solution to the following elliptic equation
\begin{equation}\label{equation}
\Delta U(x)+is \,U(x)=0,\qquad \forall\,x\in\R^2\backslash \overline{B}_R.
\end{equation}
Then, we have
$$ \tcr{U(x)\Big|_{\Gamma_{R}}=0 \quad \mbox{implies \,\,that }\quad   \PD_{\nu}U(x)\Big|_{\Gamma_{R}}=0.}$$
\end{lemma}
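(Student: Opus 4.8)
The plan is to adapt the exterior energy identity used in the proof of Lemma \ref{Lem:neu} to the present elliptic setting; the essential new feature is that the spectral parameter $is$ is purely imaginary with $\mathrm{Im}(is)=s>0$, and it is precisely this sign that will force $U$ to vanish. First I would fix $\rho>R$, work on the annulus $\Omega_\rho:=B_\rho\backslash\overline{B}_R$, multiply the equation $\Delta U+is\,U=0$ by the complex conjugate $\overline{U}$, integrate over $\Omega_\rho$, and apply Green's first identity to obtain
\[
\int_{\Omega_\rho}|\nabla U|^2\,dx-is\int_{\Omega_\rho}|U|^2\,dx=\int_{\partial\Omega_\rho}\partial_\nu U\,\overline{U}\,dS.
\]
Since $\partial\Omega_\rho=\Gamma_R\cup\Gamma_\rho$ and $U|_{\Gamma_R}=0$ by hypothesis, the contribution of the inner boundary $\Gamma_R$ drops out, leaving only the outer boundary integral over $\Gamma_\rho$. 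Taking imaginary parts (the Dirichlet term $\int|\nabla U|^2$ being real) then yields the key identity $s\int_{\Omega_\rho}|U|^2\,dx=-\,\mathrm{Im}\int_{\Gamma_\rho}\partial_\nu U\,\overline{U}\,dS$.

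The crux is then to let $\rho\to\infty$ and show that the remaining boundary term vanishes along a suitable sequence. Because $U\in H^2(\R^2\backslash\overline{B}_R)\subset H^1(\R^2\backslash\overline{B}_R)$, the nonnegative function $h(\rho):=\int_{\Gamma_\rho}\bigl(|U|^2+|\nabla U|^2\bigr)\,dS$ is integrable on $(R,\infty)$, so $\liminf_{\rho\to\infty}h(\rho)=0$ and there exists a sequence $\rho_n\to\infty$ with $h(\rho_n)\to 0$. Along this sequence Cauchy--Schwarz gives $\bigl|\int_{\Gamma_{\rho_n}}\partial_\nu U\,\overline{U}\,dS\bigr|\le\|\partial_\nu U\|_{L^2(\Gamma_{\rho_n})}\|U\|_{L^2(\Gamma_{\rho_n})}\le h(\rho_n)\to 0$. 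Passing to the limit in the imaginary-part identity and invoking $s>0$ forces $\int_{\R^2\backslash\overline{B}_R}|U|^2\,dx=0$, hence $U\equiv 0$ in the whole exterior domain. Finally, since $U$ vanishes identically in a one-sided neighbourhood of $\Gamma_R$ and $U\in H^2$, its Neumann trace $\partial_\nu U|_{\Gamma_R}$ (well defined in $H^{1/2}(\Gamma_R)$) is necessarily zero, which is the desired conclusion.

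I expect the only genuine subtlety to be the justification that the boundary term at infinity vanishes: this is where the global $H^2$-regularity of $U$ enters, via the integrability of $h(\rho)$ and the extraction of $\rho_n$, and where the sign $\mathrm{Im}(is)=s>0$ is indispensable. The argument would break down for a real spectral parameter, reflecting the fact that this ``Rellich-type'' conclusion is special to the dissipative/complex-frequency regime produced by the Laplace transform. The remaining ingredients---Green's identity on the bounded annulus, the $\liminf$ extraction, and the trace statement---are routine once $U\in H^2$ is in hand.
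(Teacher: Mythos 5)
Your proposal is correct and takes essentially the same route as the paper: the paper's one-line proof likewise multiplies \eqref{equation} by $\overline{U}$ and integrates by parts to conclude $U\equiv 0$ in $\R^2\backslash \overline{B}_R$, from which $\PD_{\nu}U|_{\Gamma_R}=0$ follows. You have merely made explicit the details the paper leaves implicit, namely the truncation to the annulus $B_\rho\backslash\overline{B}_R$, taking imaginary parts to exploit $s>0$, and killing the boundary term at infinity along a sequence $\rho_n\to\infty$ using the integrability of $\int_{\Gamma_\rho}(|U|^2+|\nabla U|^2)\,dS$ guaranteed by $U\in H^{2}(\R^2\backslash\overline{B}_R)$.
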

\begin{proof} \tcr{Multiplying the equation (\ref{equation}) by $\overline{U}(x)$ and integrating by parts lead to $U(x)\equiv 0$ in $\R^2\backslash B_R$. This particularly implies the vanishing of the normal derivative of $U$ on $\Gamma_R$.}
\end{proof}

\section{Proof of Theorem \ref{Main Theorem wave}}\label{Sec:Proof of main theorem wave}
This section is devoted to the proof of \mm{our main results in Theorem \ref{Main Theorem wave},
separated in two subsections.
We first prove the unique identification of $\PD D$ in subsection \ref{sec:shape}, and
then show in subsection \ref{sec:coefficient} two lemmas which are used to establish the uniqueness for identifying $c(x)$.
The results of Theorem \ref{Main Theorem wave} are a consequence of the results from subsections \ref{sec:shape} and \ref{sec:coefficient}.
}
\subsection{Shape identification}\label{sec:shape}
Suppose that there are two convex polygonal obstacles $(D_1, c_1)$ and $(D_2, c_2)$  which generate the identical measurement data $u_1=u_2$ on $\Gamma_R\times \R_+$. We will show that $D_1=D_2$ in this subsection. Note that we have the following transmission \tcr{conditions} on $\partial D_j$:
\ben
u_j^+=u_j^-,\quad \partial_\nu u_j^+=\partial_\nu u_j^-  \quad\mbox{on}\quad \partial D_j \times \R^+,\quad j=1,2,
\enn
where the symbols $(\cdot)^\pm$ denote the limits taking from outside (+) and inside (-) of $D_j$ with respect to the space variable, respectively.

By Lemma \ref{Lem:longtime}, we can apply the Laplace transform to $u_j$ to obtain
\ben
\widehat{u}_j(x, s):=\int_{0}^\infty u_j(x,t)e^{-st}ds\in L^{2}(\R^{2}),\quad\mbox{for\, every positive number}\quad s>0.
\enn
Recalling the assumption that $u_1=u_2$ on $\Gamma_R \times\R_+$ and Lemma \ref{Lem:neu}, we obtain
\ben
\widehat{u}_1(x,s)=\widehat{u}_2(x;s),\quad   \partial_\nu\widehat{u}_1(x,s)=\partial_\nu\widehat{u}_2(x;s),\quad\mbox{on}\quad \Gamma_R. \enn
It is easy to deduce that $u_j$, \mm{for $j=1,2$, fulfils} the inhomogeneous elliptic equation
\ben
\Delta \widehat{u}_j(x,s)-p_j(x,s)\,\widehat{u}_j(x;s)=-f(x)\widehat{g}(s),\quad\mbox{in}\quad B_R
\enn for every fixed $s>0$, where $p_j(x,s):=s^2/ c_{j}^2(x)$.
\mm{Noting that $E\subset B_R\backslash (\overline{D_1\cup D_2})$ is the support of the spacial source term $f$,
we get $\widehat{u}_1=\widehat{u}_2$ in $B_R\backslash \overline{D_1\cup D_2\cup E}$
by the unique continuation of elliptic equations.}

If $D_1\neq D_2$, without loss of generality, we may assume that there exists a corner point $\mathcal{O}\in \partial D_1\backslash \partial D_2$ such that $B_\epsilon(\mathcal{O})\cap (\overline{D}_2\cup E)=\emptyset$ for some $\epsilon>0$.   Set $\Gamma:=B_\epsilon(\mathcal{O})\cap \partial D_1$. Then we obtain
\be\label{eq:1}\left\{\begin{array}{lll}
\Delta \widehat{u}_1(x,s)-p_{1}(x, s) \widehat{u}_1(x,s)=0,\quad&&\mbox{in}\quad B_\epsilon(\mathcal{O}),\\
\Delta \widehat{u}_2(x,s)-s^2 \widehat{u}_2(x,s)=0,\quad&&\mbox{in}\quad B_\epsilon(\mathcal{O}),\\
\widehat{u}_1(x,s)=\widehat{u}_2(x,s),\quad   \partial_\nu\widehat{u}_1(x,s)=\partial_\nu\widehat{u}_2(x,s)\quad&&\mbox{on}\quad \Gamma.
\end{array}\right.
\en Note that $\widehat{u}_2$ is analytic in $B_\epsilon(\mathcal{O})$ and $\widehat{u}_1\in H^2(B_\epsilon(\mathcal{O}))$.
\tcr{Since $c_1(\mathcal{O})\neq 1$, it holds that $p_1(\mathcal{O},s)\neq s^2$ for any $s>0$.}
Applying \cite[Lemma 1]{EH2018} we obtain $\widehat{u}_1=\widehat{u}_2\equiv 0$ in $B_\epsilon(\mathcal{O})$. By the unique continuation, we see $\widehat{u}_j(x,s)=0$ in $B_R\backslash\overline{D_j\cup E}$ for $j=1,2$ and every $s>0$.  Let $E^*\supset E$ be a neighbouring area of $E$ such that \tcr{$E^*\subset B_R$ and $E^*\cap D_1=\emptyset$}. Then the function $\widehat{u}_1$
satisfies
\begin{equation}\label{Helmholtz equation for u1 in E*}
\Delta \widehat{u}_1(x,s)-s^2\widehat{u}_1(x,s)=f(x)\widehat{g}(s)\quad\mbox{in}\quad E^{*}, \quad \widehat{u}_1\equiv 0 \quad \mbox{in}\quad E^*\backslash\overline{E}
\end{equation}
for all $s>0$.
Let $v$ be any solution \mm{to the equation} $\Delta v(x,s)-s^{2}v(x,s)=0$.  Now multiplying \eqref{Helmholtz equation for u1 in E*} by $v(x,s)$  and integrating over $\R^{2}$, we have for all $s>0$ that
\begin{eqnarray}
\widehat{g}(s)\int\limits_{E^{*}}f(x)v(x,s)\D x&=&\int\limits_{E^{*}}\lb \Delta \widehat{u}_{1}(x,s)-s^{2}\widehat{u}_{1}(x,s)\rb v(x,s)\D x
\nonumber \\
&=&  \tcr{ \int\limits_{E^{*}}\lb \Delta \widehat{u}_{1}(x,s)\,v(x,s)-s^{2}\widehat{u}_{1}(x,s)\,v(x,s)\rb \D x.}
\label{eq:E*}
\end{eqnarray}
\mm{Now using the integration by parts 
and noting} the vanishing of $\hat{u}_1$ near $\partial E^*$, we get \mm{from \eqref{eq:E*} that}
\[\widehat{g}(s)\int\limits_{\R^{2}}f(x){v}(x,s)\D x=0, \ \mbox{for all $s>0$ and $v(x,s)$ as \mm{specified} above.}\]
\tcr{Since $g$ does not vanish identically, there exists an open interval in which $\hat{g}\neq 0$.}
Now {for any $\omega\in {\Sb}$, taking the special solution $v(x,s)=e^{-sx\cdot\omega}$
to $\Delta v(x,s)-s^{2}v(x,s)=0$, we deduce
\[\widehat{f}(s\omega)=0, \ \mbox{for all $\omega\in\tcr{\Sb}$ and for $s>0$ belongs to some open interval.}\]
Since $f$ is compactly supported,  we have $f\equiv 0$ in $\R^{2}$, \mm{which is not true, hence completes
the proof that $D_1=D_2$.}

\subsection{Coefficient determination}\label{sec:coefficient}
Having proved that $D_1=D_2:=D$ in subsection \ref{sec:shape},
we can now verify that $c_1=c_2$ on $D$, under the additional conditions $(i)$ and $(ii)$
\mm{as stated in Theorem \ref{Main Theorem wave}.
To this purpose, we first present two auxiliary results in Lemmas\,\ref{Lemma about uniqueness for source term}
and \ref{Lem:asy}.}

\begin{lemma}\label{Lemma about uniqueness for source term}
\mm{For two given sets
\begin{eqnarray*}
\Sigma&:=&\{(r,\theta):\ \lvert \theta\rvert<\theta_{0},\ r<1 \ \mbox{and} \ \theta_{0}\in \lb0,\frac{\pi}{2}\rb\}\,, \\
\Gamma&:=&\{(r,\theta):\ \theta=\pm\theta_{0},\ r<1\}\,,
\end{eqnarray*}
suppose that $p\in C^{0,\alpha}(\overline{\Sigma})$, $h\in L^2(\Sigma)$,
$f\in C^{0,\alpha}(\overline{\Sigma})\cap H^{2}(\Sigma)$ with $f(\mathcal{O})\neq 0$,
and $u$ is a solution to the boundary value problem
\begin{align}\label{Equation of u in lemma}
\begin{aligned}
\begin{cases}
\Delta u(x)+p(x)u(x)=h(x)f(x),\ x\in{\Sigma}\,, \\
u(x)=\PD_{\nu}u(x)=0,\ x\in\Gamma\,.
\end{cases}
\end{aligned}
\end{align}
If the source component $h$ above belongs to $S(A,b)$ in a neighborhood of $\overline{\Sigma}$,
then $h$ is identically zero in $\Sigma$.
%
}
\end{lemma}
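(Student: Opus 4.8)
The plan is to reduce the statement to an \emph{infinite-order vanishing} property of $h$ at the corner $\mathcal{O}$, exploiting that membership in $S(A,b)$ with $A,b$ analytic forces $h$ to be real-analytic up to $\mathcal{O}$ (analytic hypoellipticity for second-order elliptic operators with analytic coefficients). It then suffices to prove that every Taylor coefficient $\PD^{\beta}h(\mathcal{O})$ vanishes: once this holds, $h$ vanishes to infinite order at $\mathcal{O}$, hence $h\equiv 0$ in a neighborhood of $\mathcal{O}$ by analyticity, and then $h\equiv 0$ throughout the connected set $\Sigma$ by unique continuation for the elliptic equation it satisfies.

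The central device I would use is a family of complex-geometric-optics (CGO) solutions. After extending $p$ to a compactly supported $C^{0,\alpha}(\R^2)$ potential and fixing $\rho=\tau(\omega+i\omega^{\perp})\in\C^2$ with $\rho\cdot\rho=0$ and $\omega=(-1,0)$, I would construct $v_\rho=e^{\rho\cdot x}(1+\psi_\rho)$ solving $\Delta v_\rho+p\,v_\rho=0$ near $\overline{\Sigma}$, with $\|\psi_\rho\|_{L^2}=O(\tau^{-1})$ for $\tau$ large. Since $\theta_0<\pi/2$, one has $\omega\cdot x=-\cos\theta\le-\cos\theta_0<0$ on the circular part $\Lambda:=\{r=1\}\cap\overline{\Sigma}$, so $v_\rho$ and $\PD_\nu v_\rho$ are exponentially small on $\Lambda$. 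Green's second identity on $\Sigma$, together with $\Delta u+p\,u=hf$, $\Delta v_\rho+p\,v_\rho=0$, and the homogeneous Cauchy data $u=\PD_\nu u=0$ on $\Gamma$, then collapses to
\[\int_{\Sigma}h(x)f(x)\,v_\rho(x)\,\D x=\int_{\Lambda}\big(v_\rho\,\PD_\nu u-u\,\PD_\nu v_\rho\big)\,\D s=O\big(\tau\,e^{-\tau\cos\theta_0}\big).\]
Introducing the CGO solution of the \emph{homogeneous} equation is precisely what eliminates the unknown interior term $\int_\Sigma p\,u\,v_\rho$; working with merely harmonic test functions would leave that term at the same order as the source term and yield the trivial relation $\PD^\beta(hf)(\mathcal{O})=p(\mathcal{O})u(\mathcal{O})\cdots$ instead of any information on $h$.

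Next I would analyze the left-hand side by corner asymptotics. Expanding $hf$ against its Taylor polynomial at $\mathcal{O}$ and recalling the sector integrals $\int_{\mathcal{C}}x^{\beta}e^{\rho\cdot x}\,\D x$ over the infinite cone $\mathcal{C}=\{|\theta|<\theta_0\}$, which scale like $\tau^{-2-|\beta|}$ with explicitly computable, non-degenerate angular coefficients, one obtains an asymptotic expansion $\int_\Sigma hf\,e^{\rho\cdot x}\,\D x=\sum_{\beta}c_\beta(\rho)\,\PD^\beta(hf)(\mathcal{O})+(\text{remainder})$ in descending powers of $\tau$. Matching this against the exponentially small right-hand side, and varying $\rho$ over an admissible cone so that the leading coefficients do not vanish, forces $\PD^\beta(hf)(\mathcal{O})=0$ for all $\beta$ recursively. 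Since $f\in C^{0,\alpha}$ with $f(\mathcal{O})\neq 0$, the Taylor series of $f$ at $\mathcal{O}$ is a unit, so $\PD^\beta h(\mathcal{O})=0$ for every $\beta$, which gives the infinite-order vanishing and closes the argument via the reduction above.

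The hard part will be the remainder control. The naive Cauchy--Schwarz bound $|\int_\Sigma hf\,e^{\rho\cdot x}\psi_\rho\,\D x|\le\|hf\,e^{\rho\cdot x}\|_{L^2}\,\|\psi_\rho\|_{L^2}=O(\tau^{-2})$ only matches, but does not beat, the leading term $\sim\tau^{-2}$, so a sharper weighted estimate on the CGO remainder $\psi_\rho$ near the corner (as in the corner-scattering machinery underlying \cite{EH2018,HLM}) is required to make the remainder genuinely $o(\tau^{-2})$, and likewise $o(\tau^{-2-|\beta|})$ at each higher order. Establishing the non-vanishing of the leading sector coefficients $c_\beta(\rho)$ for a suitable $\rho$ is the second delicate point; both are exactly the technical inputs supplied by the shape-identification theory cited in the paper.
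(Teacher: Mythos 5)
Your toolbox (CGO solutions of the homogeneous equation, Green's identity with exponential decay on the circular arc, corner asymptotics of $\int_\Sigma hf\,e^{\rho\cdot x}\,dx$) is indeed the machinery underlying this lemma, but the paper does not rerun it: its proof is a three-line reduction. Since $h\in S(A,b)$ with $A,b$ analytic, $h$ is analytic near $\mathcal{O}$ and its lowest-order homogeneous Taylor term $h_N$ is \emph{harmonic} (because $\Delta h_N$ is homogeneous of degree $N-2$, while $-A\cdot\nabla h-bh$ has degree at least $N-1$); since $f\in C^{0,\alpha}$ with $f(\mathcal{O})\neq 0$, the lowest-order term of $hf$ is $f(\mathcal{O})h_N$, still harmonic; Lemma 2.3 of \cite{HLM} then yields $f(\mathcal{O})h_N=0$, hence $h\equiv 0$ near $\mathcal{O}$ by analyticity, and in all of $\Sigma$ by unique continuation. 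Note that the paper uses the hypothesis $h\in S(A,b)$ for this \emph{harmonicity of the leading term}, whereas your proposal invokes it only for analytic regularity.

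That difference is where your argument has a genuine gap: you propose to force $\partial^\beta(hf)(\mathcal{O})=0$ for \emph{all} $\beta$ recursively, ``varying $\rho$ over an admissible cone so that the leading coefficients do not vanish.'' In two dimensions this is false from degree $2$ onward. The constraint $\rho\cdot\rho=0$ forces $\rho\cdot x=\mu z$ or $\mu\bar z$ with $z=x_1+ix_2$, $\mu\in\C$, and for $j+k=N$ a direct computation over the infinite cone $\mathcal{C}=\{|\theta|<\theta_0\}$ gives
\[
\int_{\mathcal{C}} z^j\bar z^k\, e^{-\tau e^{i\phi}z}\,dx=(N+1)!\,\tau^{-N-2}\,e^{-i(N+2)\phi}\,\frac{\sin\bigl(2(k+1)\theta_0\bigr)}{k+1},
\]
so varying the admissible direction $\phi$ changes only an overall phase. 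Hence the whole CGO family produces exactly \emph{two} independent linear functionals per homogeneous degree (this one and its conjugate), acting on the $(N+1)$-dimensional space of degree-$N$ homogeneous polynomials; for $N\geq 2$ they have a nontrivial kernel, so no choice of $\rho$ can force all coefficients to vanish --- what you call the ``second delicate point'' is not delicate but unattainable. The scheme works only for the lowest-order term and only when that term is known to be harmonic: harmonic homogeneous polynomials span just $z^N$ and $\bar z^N$, and on this two-dimensional space the pair of functionals is nondegenerate, since $\sin^2(2\theta_0)-\sin^2\bigl(2(N+1)\theta_0\bigr)/(N+1)^2>0$ by the strict inequality $|\sin mt|<m|\sin t|$ for $m\geq 2$, $\sin t\neq 0$. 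Supplying that harmonicity is precisely the role of $S(A,b)$, which your proof never exploits. A secondary flaw: $f$ is only $C^{0,\alpha}\cap H^2$, so $hf$ admits no Taylor expansion beyond $f(\mathcal{O})h_N+O(r^{N+\alpha})$, and the all-orders expansion your recursion requires does not even exist. (The CGO-remainder issue you flag is real, but it is resolved by the H\"older regularity of the source in \cite{EH2018,HLM}; it is not the main obstruction.)
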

\begin{proof}
	Since $f\in H^{2}(\Sigma)\cap C^{0,\alpha}(\overline{\Sigma})$,  we have $f(x)=f(\mathcal{O})+\nabla_{x}f(\lambda \mathcal{O}+\lb 1-\lambda\rb x)$ for some $\lambda\in (0,1)$. \mm{Noting $f(\mathcal{O})\neq 0$ at $\mathcal{O}$,
	we know} that the lowest order expansion of $hf$ is harmonic. By using Lemma $2.3$ in \cite{HLM},
	we get $h(x)f(\mathcal{O})=0 $ for $x\in\overline{\Sigma}$,
	\mm{which implies $h\equiv 0$, hence completes the proof of Lemma\,\ref{Lemma about uniqueness for source term}.
	}
	\end{proof}
%
%
\begin{lemma}\label{Lem:asy} \mm{Let $u$ be the unique solution to
the initial value problem (\ref{wave-equation})-(\ref{Initial-wave}),
and $\widehat{u}$ be its Laplace transform of $u(x,t)$ with respect to the time variable.
Then for any corner point $\mathcal{O}$ of $\partial D$,}
there exists \tcr{a small number} $s_0>0$ such that $\widehat{u}(\mathcal{O},s_0)\neq 0$.
\end{lemma}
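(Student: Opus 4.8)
The plan is to establish the stronger statement that $\widehat u(\mathcal O,s)$ diverges logarithmically as $s\to 0^+$, so that it is automatically nonzero for all sufficiently small $s$. First I would apply the Laplace transform to \eqref{wave-equation}--\eqref{Initial-wave}, which is legitimate by Lemma~\ref{Lem:longtime} and the vanishing initial data; since the homogeneous operator $\Delta-s^2$ has no nontrivial $L^2$ kernel, $\widehat u(\cdot,s)$ is the unique $L^2(\R^2)$ solution of
\[
\Delta\widehat u(x,s)-s^2 V(x)\,\widehat u(x,s)=-f(x)\widehat g(s),\qquad V:=1/c^2,
\]
where $V-1$ is bounded and supported in $D$. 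Splitting $s^2V=s^2+s^2(V-1)$ and using the decaying Green's function $G_s(x,y)=\tfrac1{2\pi}K_0(s|x-y|)$ of $\Delta-s^2$, I would record the Lippmann--Schwinger representation
\[
\widehat u(x,s)=\widehat g(s)\,\Phi_s(x)-s^2\!\int_D G_s(x,y)\,(V(y)-1)\,\widehat u(y,s)\,dy,\qquad \Phi_s(x):=\int_{\R^2}G_s(x,y)f(y)\,dy.
\]

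Next I would extract the leading behaviour of the free term. Because $\operatorname{supp} f\subset E$ is separated from $\overline D$ by a positive distance, we have $|\mathcal O-y|\ge d_0>0$ for $y\in\operatorname{supp} f$, so the expansion $K_0(z)=-\ln(z/2)+O(1)$ as $z\to0^+$ gives, uniformly in such $y$, $K_0(s|\mathcal O-y|)=-\ln s+O(1)$, whence
\[
\Phi_s(\mathcal O)=\frac{-\ln s}{2\pi}\int_{\R^2}f(y)\,dy+O(1),\qquad \|\Phi_s\|_{L^\infty(D)}=O(|\ln s|).
\]
Since $g\in C_0^\infty$ gives $\widehat g(s)=\widehat g(0)+O(s)$, the free term at $\mathcal O$ equals $\tfrac{-\ln s}{2\pi}\widehat g(0)\int_{\R^2}f+O(1)$, and its leading coefficient is nonzero precisely by condition (ii) of Theorem~\ref{Main Theorem wave}.

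The hard part will be controlling the scattering correction, as the only a priori bound available, $\|\widehat u(\cdot,s)\|_{L^2}\le C/s^3$ from Lemma~\ref{Lem:longtime}, is useless as $s\to0$. The key observation I would exploit is that, since the logarithmic singularity of $K_0$ is integrable over the bounded set $D$, the operator $T_s w(x):=s^2\int_D G_s(x,y)(V(y)-1)w(y)\,dy$ satisfies
\[
\|T_s\|_{L^\infty(D)\to L^\infty(D)}\le s^2\|V-1\|_{L^\infty}\sup_{x\in D}\int_D|G_s(x,y)|\,dy=O\big(s^2|\ln s|\big)\longrightarrow 0.
\]
By elliptic regularity $\widehat u(\cdot,s)\in L^\infty(D)$, and restricting the representation to $x\in D$ gives the closed equation $(I+T_s)\widehat u(\cdot,s)=\widehat g(s)\Phi_s$ on $D$; hence $I+T_s$ is invertible there for small $s$ and $\|\widehat u(\cdot,s)\|_{L^\infty(D)}\le 2|\widehat g(s)|\,\|\Phi_s\|_{L^\infty(D)}=O(|\ln s|)$.

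Finally I would insert this bound into the correction term at $\mathcal O$: using $\int_D|G_s(\mathcal O,y)|\,dy=O(|\ln s|)$ once more, the correction is $O\big(s^2(\ln s)^2\big)=o(1)$. Combining with the free term yields
\[
\widehat u(\mathcal O,s)=\frac{-\ln s}{2\pi}\,\widehat g(0)\int_{\R^2}f(y)\,dy+O(1),\qquad s\to0^+,
\]
and since the coefficient of $-\ln s$ is nonzero while $-\ln s\to+\infty$, we get $|\widehat u(\mathcal O,s)|\to\infty$; in particular $\widehat u(\mathcal O,s_0)\neq0$ for all sufficiently small $s_0>0$, which is the assertion.
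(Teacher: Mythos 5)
Your proof is correct, and its skeleton is the same as the paper's: pass to the Laplace-transformed equation, represent $\widehat u$ via the fundamental solution $\frac{1}{2\pi}K_0(s|x-y|)=\frac{i}{4}H^{(1)}_0(is|x-y|)$, extract the logarithmic divergence $\frac{-\ln s}{2\pi}\,\widehat g(0)\int f$ of the source contribution as $s\to 0^{+}$ (using the separation of $\operatorname{supp}f$ from $\overline D$ and condition (ii) of Theorem \ref{Main Theorem wave}), show the scattering correction is $o(1)$, and conclude $|\widehat u(\mathcal O,s)|\to\infty$. Where you genuinely differ is in the key technical step of controlling that correction. The paper bootstraps from its a priori bound $s^{3}\lVert\widehat u(\cdot,s)\rVert_{L^{2}(\R^{2})}\leq C$ of Lemma \ref{Lem:longtime}: it multiplies the representation \eqref{solution to wave-equation after Laplace equation} first by $s^{2}$ to conclude $s^{2}\widehat u\to 0$ and is bounded near $s=0$, then by $s$ to get $s\widehat u$ bounded, and only on the third pass obtains the blow-up. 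You instead close the equation on $D$ and show the perturbation operator $T_s$ has $L^{\infty}(D)\to L^{\infty}(D)$ norm $O(s^{2}|\ln s|)$, so that $I+T_s$ is invertible by a Neumann series and $\lVert\widehat u(\cdot,s)\rVert_{L^{\infty}(D)}=O(|\ln s|)$ in a single stroke, making the correction $O(s^{2}(\ln s)^{2})$. Your route buys explicit rates and bypasses a delicate point in the paper, namely the passage in \eqref{Estimate after multiplying by $s^{2}$} from the pointwise limit $s^{2}\widehat u(x,s)\to 0$ plus continuity to a bound uniform in $x\in\overline{B_R}$ and $s$ near $0$, which really needs the uniformity that your operator-norm estimate makes explicit; the price is the extra functional-analytic setup ($H^{2}\hookrightarrow L^{\infty}$ regularity and the invertibility of $I+T_s$), whereas the paper stays entirely within the $L^{2}$ framework it has already established. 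Both arguments use the hypotheses $\widehat g(0)\neq 0$ and $\int_{\R^{2}}f\,dy\neq 0$ in exactly the same way, and both in fact prove the stronger statement that $\widehat u(\mathcal O,s)\to\infty$ as $s\to 0^{+}$, not merely the existence of one admissible $s_0$.
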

\begin{proof}
It is easy to check that
\begin{equation}\label{wave-equation after Laplace transform}
\Delta \widehat{u}(x,s)-s^2\widehat{u}(x,s)=-f(x)\widehat{g}(s)+s^2\lb \frac{1}{c^{2}(x)}-1\rb \widehat{u}(x,s)\quad \mbox{in}\quad \R^2.
\end{equation}
Recall that
\[\lb \Delta-s^{2}\rb \frac{i}{4}H^{(1)}_{0}\lb is\lvert x-y\rvert\rb=\delta(x-y),\ \mbox{for fixed $y\in\R^{2}$ and $x\neq y$}\]
where $H^{(1)}_{0}$ is the Hankel function  of first kind of order zero, \mm{which has the following asymptotic expansion
as $s\to 0$ (cf.\,\cite{[Nada]}):}
\begin{equation}\label{Asymptotic expansion for Hankel}
\frac{i}{4}H^{(1)}_{0}(is\lvert x-y\rvert)=-\frac{1}{2\pi}\ln\lvert x-y\rvert +\frac{i}{4}-\ln \frac{is}{2}-\frac{C}{2\pi}+O(s^{2}\lvert x-y\rvert^{2}\ln s\lvert x-y\rvert)\,.
\end{equation}
\mm{Then the solution to Equation \eqref{wave-equation after Laplace transform} can be} given by
\begin{align*}
\begin{aligned}
\widehat{u}(x,s)=-\frac{i}{4}\widehat{g}(s)\int\limits_{\R^{2}}f(y)H^{(1)}_{0}\lb is\lvert x-y\rvert\rb \D y +\frac{i}{4}s^{2}\int\limits_{\R^{2}}\lb \frac{1}{c^{2}(y)}-1\rb \widehat{u}(y,s)H^{(1)}_{0}(is\lvert x - y\rvert)\D y.
\end{aligned}
\end{align*}
Taking $x,y\in B_{R}$, \mm{sufficiently small $s>0$ and using the asymptotic expansion \eqref{Asymptotic expansion for Hankel},}
we get
	\begin{equation}\label{solution to wave-equation after Laplace equation}
	\begin{aligned}
\widehat{u}(x,s)&=-\frac{i}{4}\widehat{g}(s)\int\limits_{\R^{2}}f(y)\lb -\frac{1}{2\pi}\ln\lvert x-y\rvert +\frac{i}{4}-\ln \frac{is}{2}-\frac{C}{2\pi}\rb \D y\\
& \ \  +\frac{i}{4}s^{2}\int\limits_{\R^{2}}\lb \frac{1}{c^{2}(y)}-1\rb \widehat{u}(y,s)\lb-\frac{1}{2\pi}\ln\lvert x-y\rvert +\frac{i}{4}-\ln \frac{is}{2}-\frac{C}{2\pi}\rb\D y\\
& \ \ -\frac{i}{4}\widehat{g}(s)\int\limits_{\R^{2}}f(y)O(s^{2}\lvert x-y\rvert^{2}\ln s\lvert x-y\rvert)\D y \\
&\ \ +\frac{i}{4}s^{2}\int\limits_{\R^{2}}\lb \frac{1}{c^{2}(y)}-1\rb \widehat{u}(y,s)O(s^{2}\lvert x-y\rvert^{2}\ln s\lvert x-y\rvert)\D y.
	\end{aligned}
	\end{equation}
Multiplying Equation \eqref{solution to wave-equation after Laplace equation} by $s^{2}$ and using Equation \eqref{Estimate for Sobolev norm of Laplace transform for u}  together with the fact that $f,g$ and $\lb \frac{1}{c^{2}(x)}-1\rb$ are compactly supported, we
derive
\begin{equation}\label{limit after multiplying by $s^{2}$}
\lim_{s\rightarrow 0}s^{2}\widehat{u}(x,s)=0,\ \mbox{for  $x\in \overline{B_{R}}$}.
\end{equation}
\mm{Then noting that} $s^{2}\widehat{u}(x;s)$ is a continuous function for $x\in\R^{2}$ and $s>0$,
\mm{we know} there exist a constant $M>0$ such that
\begin{align}\label{Estimate after multiplying by $s^{2}$}
\lvert s^{2}\widehat{u}(x,s)\rvert\leq M, \ \mbox{for $s$ close to $0$ and $x\in\overline{B_{R}}$}.
\end{align}
Now multiplying Equation \eqref{solution to wave-equation after Laplace equation} by $s$ and using Equation \eqref{Estimate after multiplying by $s^{2}$}, we get
\begin{align}\label{limit after multiplying by $s$}
\lim_{s\rightarrow 0}s\widehat{u}(x,s)=0,\ \mbox{for $x\in B_{R}$}.
\end{align}
Using this and repeating the same argument \mm{as above, we know the existence of} a constant $M_{1}>0$ such that
\begin{align}\label{Estimate after multiplying by $s$}
\lvert s\widehat{u}(x,s)\rvert\leq M_{1}, \ \mbox{for $s$ close to $0$ and $x\in \overline{B_{R}}$}.
\end{align}
Finally using Equation \eqref{Estimate after multiplying by $s$} in \eqref{solution to wave-equation after Laplace equation} and the fact that $\widehat{g}(0)\neq 0$, $\int\limits_{\R^{2}}f(y)\D y\neq 0$,  \mm{we further deduce}
\begin{align*}
\lim_{s\rightarrow 0}\widehat{u}(x,s)=\infty, \ \mbox{for $x\in\overline{B_{R}}$}.
\end{align*}
\mm{This implies that for $\mathcal{O}\in\PD D$, there exists a sufficiently small $s_{0}$ such that
$\widehat{u}(\mathcal{O},s_{0})\neq 0,$
hence completes the proof of Lemma\,\ref{Lem:asy}.
}
\end{proof}
\begin{proof}[Proof of Theorem \ref{Main Theorem wave}]
\mm{The uniqueness for $\PD D$ was already established in subsection \ref{sec:shape}.
Next we prove the uniqueness of identifying $c(x)$, namely, $c_1(x)=c_2(x)$ for all $x\in \overline{D}$.
}
Taking the Laplace transform to $u_j$, it follows from the arguments as in subsection\,\ref{sec:shape}
that $\widehat{u}_1=\widehat{u}_2$ in $B_R\backslash\overline{D\cup E}$. Let $\mathcal{O}\in\partial D$ be a corner point and set $\Sigma:=B_\epsilon(\mathcal{O})\cap D$ for some small $\epsilon>0$. Then it is easy to see
\ben\left\{\begin{array}{lll}
\Delta \widehat{u}_1(x,s)-p_1(x,s) \widehat{u}_1(x,s)=0\quad&&\mbox{in}\quad \Sigma,\\
\Delta \widehat{u}_2(x,s)-p_2(x,s) \widehat{u}_2(x,s)=0\quad&&\mbox{in}\quad \Sigma,\\
\widehat{u}_1(x,s)=\widehat{u}_2(x,s),\  \partial_\nu\widehat{u}_1(x,s)=\partial_\nu\widehat{u}_2(x,s)\quad&&\mbox{on}\quad \Gamma:=B_\epsilon(\mathcal{O})\cap \partial D.
\end{array}\right.
\enn
Setting $w:=\widehat{u}_1-\widehat{u}_2$, we get
\begin{equation}\label{Equation for w differene of u1 and u2}
\begin{aligned}
\begin{cases}
&\Delta w-p_1(x,s)w=s^2 h(x) \widehat{u}_2(x,s)\quad\mbox{in}\quad \Sigma,\\
&w=\partial_\nu w=0\quad\mbox{on}\quad \Gamma,
\end{cases}
\end{aligned}
\end{equation}
where $h$ is defined by
\ben
h(x):=1/c_1^2(x)-1/c_2^2(x)=V_1(x)-V_2(x),\quad x\in \overline{\Sigma}.
\enn
\mm{By the assumption of $c_j$, we know $h\in S(A, b)$. Recalling the result in \cite{HLM},  we know
the lowest order expansion of $h$ near $\mathcal{O}$ is harmonic.
%
%
Furthermore, by Lemma \ref{Lem:asy} we know the existence of $s_{0}>0$ such that $\widehat{u}_2(\mathcal{O},s_{0})\neq 0$.
Then applying the Taylor series expansion for $\widehat{u}_2(x,s)$ around $\mathcal{O}$ leads to the fact that the lowest order expansion of $h(x)\widehat{u}_{2}(x,s_{0})$ in Equation \eqref{Equation for w differene of u1 and u2} belongs to $S(A,b)$.
Now using Lemma \ref{Lemma about uniqueness for source term}, we know $h\equiv 0$ in $\overline{\Sigma}$,
and the unique continuation argument further gives $c_{1}(x)=c_{2}(x)$ for $x\in\overline{D}$.
This completes the proof of Theorem \ref{Main Theorem wave}.
}
\end{proof}

\section{Proof of Theorem \ref{THM2}}\label{Sec: Proof of main Theorem Schrodinger}

\subsection{Shape identification}
Our goal in this subsection is to deal with the obstacle identification problem for the time dependent Schr\"odinger equation (\ref{Eq1.1}). More precisely, we aim  to prove that the measurement data  $u_{|\Gamma_R\times\R_+}$ can uniquely determine the object $D$ defined as the support of the coefficient $q$. \tcr{We will \mm{make some appropriate changes of} the proof of Theorem
\ref{Main Theorem wave} for the wave equation to be applicable to the Schr\"odinger equation.}

Let us consider two convex  polygonal obstacles   $D_1$ and $D_2$ corresponding to the two electric potentials $q_1$ and $q_2$
respectively. 
Let $u_1$ and $u_2$ be two respective solutions to the initial value problem(\ref{Eq1.1}) and (\ref{initial-Schrodinger})
for the Schr\"odinger equations corresponding to the coefficients $q_1$ (with support $D_1$) and $q_2$ (with support $D_2$).
After applying the Laplace transform, one can see that for any fixed $s>0$, the solutions  $\hat{u}_j$ for $j=1,\,2$ satisfy
$$\Delta \hat{u}_j(x,s)+(q_j(x)+is) \,\hat{u}_j(x,s)=i\,u_0(x),\quad \mbox{for all} \ \,x\in B_R$$
and $\hat{u}_1=\hat{u}_2$ on $\Gamma_R$ for any fixed $s>0$.
Let us recall that the function $u_0$ satisfies supp$(u_0)\subset E$. 
In view of the proof of Lemma \ref{Lemma 2} we obtain
 $\hat{u}_1=\hat{u}_2$ in $(\R^2\backslash B_R)\times \R_{+}$. 
\noindent Thus, by the unique continuation principle for elliptic equations, one can see that for any fixed $s>0$, we have
the following identity
\begin{equation}\label{3.8}
\hat{u}_1(x,s)=\hat{u}_2(x,s)\quad \mbox{in}\,\,B_R\backslash (\overline{D_1\cup D_2\cup E}).\color{black}
\end{equation}
\noindent On the other hand, by assuming that $D_1\!\neq \!D_2$, one can see that there exists (without loss of generality) a corner point $\mathcal{O}\in \partial D_1\backslash \partial D_2$.
For  $\varepsilon>0$, \mm{we recall that} $B_{\varepsilon}(\mathcal{O})$ is the ball centred in $\mathcal{O}$ satisfying $$B_{\varepsilon}(\mathcal{O})\cap (E\cup \overline{D_2})=\emptyset,$$
and  $\Gamma:=\partial D_1 \cap B_{\varepsilon}(\mathcal{O})$.
Since   $D_2\cap B_\varepsilon(\mathcal{O})=\emptyset$, then for any fixed $s>0$, $u_1$ and $u_2$ satisfy
\begin{equation}\label{+}\Delta \hat{u}_1(x,s)+(q_1(x)+is)\,\hat{u}_1(x,s)=0,\quad \mbox{and}\,\,\,\,\Delta \hat{u}_2(x,s)+is \,\hat{u}_2(x,s)=0,\quad \mbox{for all}\ \,x\in B_\varepsilon(\mathcal{O}).
\end{equation}
\noindent Moreover, taking into  account the fact that  $\hat{u}_j(\cdot,s)\in H^2(\R^2)$ for $j=1,\,2$, one can see in light of (\ref{3.8}) and  Lemma \ref{Lemma 2} that  we have for any fixed $s>0$,
 \begin{equation}\label{3.10}
\hat{u}_1(x,s)=\hat{u}_2(x,s),\quad \mbox{and}\,\,\,\,\,
\PD_\nu \hat{u}_1(x,s)=\PD_\nu \hat{u}_2(x,s),\quad \,\mbox{for all}\  \,x\in\Gamma.
\end{equation}
\tcr{By our assumption, $q_1(O)\neq 0$. }
Now, applying \cite[Lemma 1]{EH2018} (see also
Lemma \ref{Lemma about uniqueness for source term}) to the Cauchy problem
  (\ref{+}) and (\ref{3.10}),  we obtain the following identity \tcr{(cf. (\ref{eq:1}) in the wave equation case)}:
\begin{equation}\label{3.13}
\hat{u}_1(x,s)=\hat{u}_2(x,s)=0,\quad\,\, \forall x\in B_\varepsilon(\mathcal{O})
\end{equation}
for any fixed $s>0$. In view of the unique continuation principle, we have
\begin{equation}\label{3.14}
\hat{u}_1(x,s)=0,\quad \forall \,x\in B_R\backslash (\overline{D_1\cup E}).
 \end{equation}
 To derive the desired contradiction,  we still denoted by $E^*$ a neighborhood of  $E$ that satisfies the conditions
$$E\subset E^*\subset B_R,\quad \mbox{and}\,\,\,\,\, E^*\cap D_1=\emptyset.$$
Let $v$ be an arbitrary solution to the homogeneous equation
$$\Delta v(x,s)+is\,v(x,s)=0.$$
Multiplying $v$ to the equation of $\hat{u}_1$:
\begin{equation}\label{*}
\Delta \hat{u}_1(x,s)+is\,\hat{u}_1(x,s)=i\,u_0(x),\qquad \mbox{for all}\  \,x\in B_R
\end{equation}
and integrating over $E^*$, one gets the following identity
\begin{eqnarray*}
i\,\int_{E^*} u_0(x)\,v(x,s)\,dx&=&
\int_{E^*}(\Delta\hat{u}_1(x,s)+is \,\hat{u}_1(x,s))v(x,s)dx\\
&=&\int_{\PD E^*} (\PD_\nu \hat{u}_1(x,s)v(x,s)-\PD_\nu v(x,s)\hat{u}_1(x,s))\,d s.
\end{eqnarray*}
Now, \mm{taking} $v(x,s)=i\,e^{-i\sqrt{s}\,x\cdot \theta}$ with $\theta\in \mathbb{S}$ \mm{above} and using (\ref{3.14}), we get
$$ \mathcal{F}(u_0)(\sqrt{s}\,\theta)= \int_{\R^2} u_0(x) e^{-i\sqrt{s}\,\theta\,x}\,dx=0,\quad \mbox{for all}\  \,\theta\in\mathbb{S},\,\,\,s>0,$$
 where $\mathcal{F}(\cdot)$ denotes the Fourier transform of $u_0$. This implies that $u_0\equiv 0$ in $\R^2$, which is a contradiction. \mm{Thus we have proved} $D_1=D_2$.
\subsection{Coefficient identification}

Our  goal in this subsection is to pursue the proof of Theorem \ref{THM2}.
\mm{For $q_1,\,q_2\in\mathcal{Q}$, having already proved that $D_1=D_2=:D$, we now move forward}
to show that $q_1=q_2$ in $D$.
We start first with \mm{one of the main key ingredients in our proof.}
 \begin{lemma}\label{Lem:high}
Let $\mathcal{O}\in\PD D$ be a corner point, $\hat{u}$ be the solution to the equation
  \begin{equation}\label{4.7}\Delta \,\hat{u}(x,s)+(q(x)+is)\,\hat{u}(x,s)=i u_0(x),\quad \mbox{for all}\  \,x\in\R^2,\,s>0.
 \end{equation}
Then  there exists \mm{a sufficiently large} $s_0>0$ such that $\hat{u}(\mathcal{O},s_0) \neq 0$.
  \end{lemma}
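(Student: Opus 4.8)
The plan is to mirror the small–parameter analysis of Lemma \ref{Lem:asy}, but now in the large–$s$ regime where the energy bound of Lemma \ref{prop1.2} becomes favourable. First I would recast Equation \eqref{4.7} as $(\Delta+is)\hat u = iu_0 - q\hat u$ and represent $\hat u$ through the outgoing fundamental solution $\Phi_s(x,y) = -\frac{i}{4}H_0^{(1)}(\sqrt{is}\,|x-y|)$ of the operator $\Delta+is$, where the branch $\sqrt{is}=\sqrt{s}\,e^{i\pi/4}$ is the one producing the decaying ($L^2$) solution singled out by Lemma \ref{prop1.2}. Evaluating at the corner and separating the source gives
\[\hat u(\mathcal{O},s) = \underbrace{\frac14\int_{\R^2}H_0^{(1)}(\sqrt{is}\,|\mathcal{O}-y|)\,u_0(y)\,dy}_{=:\,I_1(s)} + \underbrace{\frac{i}{4}\int_{D}H_0^{(1)}(\sqrt{is}\,|\mathcal{O}-y|)\,q(y)\,\hat u(y,s)\,dy}_{=:\,I_2(s)}.\]

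For the source term $I_1$ I would pass to polar coordinates centred at $\mathcal{O}$ to obtain the one–dimensional Hankel transform $I_1(s)=\frac14\int_0^\infty H_0^{(1)}(\sqrt{is}\,\rho)\,m(\rho)\,\rho\,d\rho$, where $m(\rho)$ is the circular mean of $u_0$ over the circle of radius $\rho$ about $\mathcal{O}$. The non-degeneracy hypothesis on $u_0$ (condition (ii) of Theorem \ref{THM2}, read in coordinates centred at $\mathcal{O}$) guarantees that this mean does not vanish identically, so $I_1$ is a nontrivial analytic function of $s$. Since $\mathrm{supp}\,u_0$ sits at a positive distance $d:=\mathrm{dist}(\mathcal{O},\mathrm{supp}\,u_0)>0$ from the corner, inserting the large–argument expansion $H_0^{(1)}(z)\sim\sqrt{2/(\pi z)}\,e^{i(z-\pi/4)}$ (note $|e^{i\sqrt{is}\rho}|=e^{-\sqrt{s/2}\,\rho}$) and running a steepest–descent (Laplace-type) analysis on the $\rho$-integral extracts a leading behaviour of the form $I_1(s)\sim c\,s^{-\kappa}e^{-\sqrt{s/2}\,d}$ with $c\neq0$.

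The scattering term $I_2$ must then be shown to be genuinely subordinate. Here the advantage over the wave case is the bound $\lVert \hat u(\cdot,s)\rVert_{L^2(\R^2)}=O(1/s)$ coming from Lemma \ref{prop1.2}; combined with an exponential-in-$\sqrt{s}$ decay estimate for $\hat u$ on $D$ (reflecting that $D$ is separated from $\mathrm{supp}\,u_0$) and with $\lVert H_0^{(1)}(\sqrt{is}\,|\mathcal{O}-\cdot|)\rVert_{L^2(D)}=O(s^{-1/2})$, one forces $I_2(s)=e^{-\sqrt{s/2}\,d}\,O(s^{-\kappa'})$ with $\kappa'>\kappa$. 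Hence $\hat u(\mathcal{O},s)=I_1(s)+I_2(s)\sim c\,s^{-\kappa}e^{-\sqrt{s/2}\,d}$; equivalently $I_1$ and $I_2$ carry the same exponential rate but different algebraic prefactors and so cannot cancel identically as $s\to\infty$, yielding some large $s_0$ with $\hat u(\mathcal{O},s_0)\neq0$.

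The step I expect to be the main obstacle is precisely the control of $I_2$. The crude estimate $|I_2(s)|\le C\,\lVert H_0^{(1)}\rVert_{L^2(D)}\lVert \hat u\rVert_{L^2(D)}\le C s^{-3/2}$ is only polynomially small and therefore overwhelms the exponentially small target $I_1$, so the naive bound is useless; the delicate task is to upgrade the energy bound of Lemma \ref{prop1.2} into an Agmon/Combes–Thomas–type exponential decay estimate for $\hat u$ on the scatterer and then carry out the matching power count establishing $\kappa'>\kappa$. As a fallback I would argue by contradiction: if $\hat u(\mathcal{O},s)=0$ for all large $s$, then analyticity of $s\mapsto\hat u(\mathcal{O},s)$ on $\{\real s>0\}$ together with injectivity of the Laplace transform forces $\hat u(\mathcal{O},\cdot)\equiv0$, which contradicts the nonvanishing of the leading asymptotic coefficient $c$ furnished by the nontrivial circular mean $m$ of condition (ii).
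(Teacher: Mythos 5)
Your proposal follows the same route as the paper's proof of Lemma \ref{Lem:high}: the splitting $\hat u=\hat v+\hat w$ into a free part driven by $iu_0$ (your $I_1$) and a scattering part $\hat w=K_s\hat u$ (your $I_2$), polar coordinates and circular means centred at the corner, large-argument Hankel asymptotics, and subordination of the scattering part. The genuine gap is your central claim that condition (ii) of Theorem \ref{THM2} forces a leading asymptotic $I_1(s)\sim c\,s^{-\kappa}e^{-\sqrt{s/2}\,d}$ with $c\neq 0$. This is unjustified and in general false. As in the paper, the principal part of $\hat v(\mathcal{O},s)$ equals, up to a nonzero constant, $s^{-1/4}\hat g(Z_s)$ with $Z_s=\sqrt{s/2}\,(1-i)$ and $g(r)=\sqrt r\int_0^{2\pi}u_0(r\cos\theta,r\sin\theta)\,d\theta$. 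A Watson/steepest-descent analysis of $\hat g(Z_s)$ is governed by the behaviour of $g$ at the left endpoint $r=d$ of its support; since $u_0\in \mathcal{C}^\infty_0(E)$ vanishes to infinite order on the boundary of its support, $g$ vanishes to infinite order at $r=d$, so $e^{\sqrt{s/2}\,d}\,|\hat g(Z_s)|$ decays faster than \emph{any} power of $s$: there is no finite $\kappa$ and no nonzero $c$. Condition (ii) asserts $g(r_0)\neq 0$ at a single radius and carries no information about endpoint behaviour, so it cannot produce such a coefficient. The paper never needs an expansion of this kind: it assumes $\hat v(\mathcal{O},s)=0$ for all large $s$, deduces $\hat g(Z_s)=0$ on a ray, and then uses that $\hat g$ is entire ($g$ being compactly supported) together with the identity theorem to conclude $g\equiv 0$, contradicting (ii). This is exactly what makes the weak hypothesis (ii) sufficient. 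Your fallback gestures at this mechanism (analyticity in $s$, vanishing on a ray), but it closes the contradiction by citing ``the nonvanishing of the leading asymptotic coefficient $c$'' --- the same unjustified claim --- so it inherits the flaw; the contradiction must be with $g\not\equiv 0$ itself, not with any asymptotic coefficient.

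Your treatment of $I_2$ is, by your own admission, not a proof: the Agmon/Combes--Thomas exponential bound is left entirely open, and the target power count $\kappa'>\kappa$ is not even well posed, since $\kappa$ does not exist (see above). For comparison, the paper's Step 2 is much more modest: it proves the operator bound $\|K_s\|\le Cs^{-1/4}$, hence invertibility of $I-K_s$, and via the Neumann series $\hat u=\sum_{n\ge0}K_s^n\hat v$ the estimate $\|\hat w\|_{L^\infty(B_\varepsilon(\mathcal{O}))}\le Cs^{-1/4}\|\hat v\|_{L^\infty(B_R)}$, i.e.\ subordination of $\hat w$ to $\hat v$ in \emph{global} sup-norm, with no exponential weights. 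Your observation that a merely polynomially small bound on the scattering term cannot be weighed pointwise against the exponentially small $\hat v(\mathcal{O},s)$ is sharp, and it is in fact a criticism that applies to the published Step 2 as well; but identifying the difficulty is not resolving it. As written, both halves of your argument --- the nonvanishing of $I_1$ and the domination of $I_2$ --- remain without a complete proof, whereas the paper's identity-theorem argument for Step 1 is self-contained and is the piece you should adopt.
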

\begin{proof}
Let us decompose the solution $\hat{u}$ into the sum $\hat{u}=\hat{v}+\hat{w}$, where $\hat{v}$ solves the equation
\begin{equation}\label{4.8}
\Delta \,\hat{v}(x,s)+is\,\hat{v}(x,s)=i\,u_0(x),\quad \mbox{for all}\ \,x\in\R^2,\,\,s>0,\end{equation}
and  $\hat{w}$ satisfies
\begin{equation}\label{4.9}
 \Delta \,\hat{w}(x,s)+is\,\hat{w}(x,s)+q(x)\,\hat{w}(x,s)=0,\quad  \mbox{for all}\ \, x\in\R^2,\,\,s>0.
 \end{equation}
 \tcr{Note that $v(x,t)$ satisfies the initial value problem (\ref{Eq1.1})-(\ref{initial-Schrodinger})  for the Schr\"odinger equation with $q\equiv 0$ in $\R^2$ and $w:=u-v$ denotes the scattered field incited by the potentials $v(x,t)$ and  $q(x)$. }
The proof  will be divided into two steps.\\
\noindent \textbf{Step 1:}  We prove that there exists a large $s_0\!>\!0$ such that $\hat{v}(\mathcal{O},s_0)\neq 0$.  Indeed, one can easily see that \mm{the solution $\hat{v}$}  to (\ref{4.8})  solves the following integral equation
$$\hat{v}(x,s)=\int_{E} i\,\phi(x,y;s)\,u_0(y)\,dy,$$
where the function $\phi$ is given by
\begin{equation}\label{phi}
\phi(x,y;s)=\frac{i}{4} H^{(1)}_0(\sqrt{is}\,|x-y|)=\frac{i}{4}H^{(1)}_0(\sqrt{s}e^{i\pi/4}|x-y|).
\end{equation}
Here $H^{(1)}_0$ denotes the Hankel function of  the first kind of order zero. This yields the following identity
$$\hat{v}(x,s)=\int_{E} \frac{-1}{4}\, H^{(1)}_0(\sqrt{s}\,e^{i\pi/4}|x-y|)\,u_0(y)\,dy.$$
Then, by taking  $s$  to the infinity, we will get in view of  \color{black} the asymptotic behavior of $H^{(1)}_0$ at infinity,
together with the identity (47) in \cite{[Nada]}, \color{black} that only the principal part of $\hat{v}$ will dominate and it will be equivalent to
\begin{eqnarray}\label{eq:2}
\hat{v}(x,s)\simeq -\frac{\sqrt{2}\,e^{-i\pi/4}}{4\sqrt{\pi\,e^{i\pi/4}}}\,\int_{\R^2}\frac{ u_0(y)}{\sqrt{\sqrt{s}\,|x-y|}}\,\,e^{i \sqrt{s}\,|x-y|\,\big(\frac{1}{\sqrt{2}}+i\frac{{1}}{\sqrt{2}}\big)}\,dy,\qquad \mbox{as}\,\,{s\rightarrow \infty}.
\end{eqnarray}
Thus, at the corner point $\mathcal{O}$ which is assumed to be, without loss of generality,  the origin of $\R^2$,  we have
\begin{eqnarray}\label{4.19}\hat{v}(\mathcal{O},s)\displaystyle & \simeq & -\frac{\sqrt{2}\,e^{-i\pi/4}}{4\sqrt{\pi\,e^{i\pi/4}}}\,\int_{\R^2}\frac{ u_0(y)}{\sqrt{\sqrt{s}|y|}}\,\,e^{i\sqrt{s}\,|y|\,\big(\frac{1}{\sqrt{2}}+i\frac{1}{\sqrt{2}}\big)}\,dy,\\ \nonumber
&= & - C_0 \,\int_{\R^2}\frac{u_0(y)}{\sqrt{\sqrt{s}|y|}}\,\,e^{i\sqrt{s}\,|y|\,\big(\frac{1}{\sqrt{2}}+i\frac{1}{\sqrt{2}}\big)}\,dy
\end{eqnarray}
as $s\rightarrow \infty$, where $C_0\in \C$ is a constant.
Let us denote by $$\tcr{I(s)}:=\int_{\R^2}\frac{ u_0(y)}{\sqrt{\sqrt{s}|y|}}\,\,e^{i\sqrt{s}\,|y|\,\big(\frac{1}{\sqrt{2}}+i\frac{1}{\sqrt{2}}\big)}\,dy. $$
In polar coordinates with $y=(r\cos \theta, r\sin\theta)$ and $r=|y|$, the integral $I$ will be given by
\begin{eqnarray*}
\tcr{I(s)}&=& s^{\frac{-1}{4}}\int_{0}^\infty e^{-(\frac{\sqrt{s}}{\sqrt{2}}-i\frac{\sqrt{s}}{\sqrt{2}})\,r}\,\,\sqrt{r}\,\Big(\int_{0}^{2\pi} u_0(r\cos\theta,r\sin\theta)\,\,d\theta\Big)\,\,\,dr\\
&=& s^{\frac{-1}{4}} \int_{0}^\infty e^{-Z_s\,r}\,\, g(r)\,\,dr\\
&:=& \tcr{s^{\frac{-1}{4}}}\,\hat{g}(Z_s),
\end{eqnarray*}
where $Z_s:= {\sqrt{s}}/{\sqrt{2}}-i {\sqrt{s}}/{\sqrt{2}}$ and $\hat{g}$ denotes the Laplace transform of  $g$
with respect to $r$, defined by
$$g(r):= \sqrt{r}\,\int_{0}^{2\pi} {u_0(r\cos \theta,r\sin\theta)}\,d\theta.$$
\tcr{Assume on the contrary that $\hat{v}(O,s)\equiv 0$ for \mm{all $s\geq M_0$ for a large constant
$M_0>0$.} Then the principal part $I(s)$ must also vanish for $s\geq M_0$, implying that
  $\hat{g}(Z_s)=0$ for $s\geq M_0$. Since $u_0$ has a compact support, we have $g(r)\equiv 0$ for large $r$. Therefore, the analyticity of $\hat{g}$
 leads to
 $g(r)=0$ for any $r\geq 0$. Consequently,
  $$\int_{0}^{2\pi} {u_0(r\cos \theta,r\sin\theta)} \,d\theta=0\quad \mbox{for\,\,any}\,\,r\geq 0,$$
 which contradicts the condition ii).} \\

 \noindent \textbf{Step 2:} \tcr{\mm{We prove that  the principal part of $\hat{u}(O,s)$ is
 dominated by $\hat{v}(O,s)$ as $s\rightarrow \infty$,} namely,  $\hat{w}(O,s)$ decays faster than $\hat{v}(O,s)$  when $s$ goes to the infinity. In the frequency domain, it is well known the solution $\hat{w}$ can be represented via the integral equation}
  $$\hat{w}(x,s)=\int_{B_R} \phi(x,y; s) \,q(y)\,\hat{u}(y,s)\,dy, $$
  where $\phi$ is given by $(\ref{phi})$.
  Define the integral operator
  $$\begin{array}{ccc}
  K_s: L^\infty(B_R)&\longrightarrow &L^\infty(B_R)\\
   \hat{u}&\longmapsto &\displaystyle\int_{B_R} \phi(x,y; s)\,q(y)\,\hat{u}(y,s)\,dy,\qquad \color{black}  \color{black}.
\end{array}$$
\tcr{Similar to (\ref{eq:2}),}
 we have for sufficiently large $s$ that
 \begin{equation}\label{what}\|\hat{w}\|_{L^{\infty}(B_R)}= \|K_s\,\hat{u}\|_{L^{\infty}(B_R)}\leq \frac{C}{s^{1/4}}\,\|\hat{u}\|_{L^{\infty}(B_R)}
 \end{equation}
  where $C$ depends on $M$ and  $B_R$. Thus, for large $s$ it holds that
 $$\|K_s\|< \frac{C}{s^{1/4}}<1.$$
 This entails that $(I-K_s)$ is an invertible operator. Since $\hat{u}=\hat{v}+\hat{w}=\hat{v}+K_s\hat{u}$,  then one can get
 \begin{equation}\label{uhat}\hat{u}=(I-K_s)^{-1} \hat{v}=\sum_{n=0}^\infty K_s^n \hat{u}=\hat{v}+K_s\hat{v}
 +\mm{\cdots}+K_s^{n}\hat{v}+\mm{\cdots}
 \end{equation}
We recall that $B_{\varepsilon}(\mathcal{O})$ is a neighborhood of the corner point $\mathcal{O}$.  Therefore,  for any $n\geq 1$ we have
\begin{equation}\label{estimation}\|K_s^n \hat{v}\|_{L^{\infty}(B_{\varepsilon}(\mathcal{O}))}\leq\|K_s^n \hat{v}\|_{L^{\infty}(B_R)}\leq \frac{C}{s^{n/4}} \|\hat{u}\|_{L^{\infty}(B_R)}\leq \frac{C}{s^{n/4}} \|\hat{v}\|_{L^{\infty}(B_R)}.
\end{equation}
In view of (\ref{estimation}), we can see that the second part $\hat{w}$ of the solution $\hat{u}$ decays faster to zero than $\hat{v}$ as $s$ intends to infinity.  This together with the first step completes the proof of \mm{Lemma \ref{Lem:high}}.
 \end{proof}

\begin{proof}[Proof of Theorem \ref{THM2}]
Let $\hat{u}_j$ ($j=1,2$) be the solution to the equation corresponding to   $q_j$, namely
$$\Delta \hat{u}_j(x,s)+(q_j(x)+is)\hat{u}_j(x,s)=i\,u_0(x),\quad  \forall\,x\in B_\varepsilon(\mathcal{O})\cap D.$$
Arguing like in the previous section, we can get that
 \begin{equation}\label{eqqq}\hat{u}_1=\hat{u}_2,\quad \mbox{and}\,\, \,\,\PD_\nu \hat{u}_1=\PD_\nu \hat{u}_2,\qquad \forall \,x\in \Gamma=B_\varepsilon\cap \PD D.
 \end{equation}
Denote $u:=u_1-u_2$. Then $u$ is a solution to
\begin{equation}\label{eqq}
\Delta u(x,s)-q_1(x) \,u(x,s)=(q_1-q_2)(x) u_2(x,s),\quad \forall\,x\in B_{\varepsilon}(\mathcal{O}).
\end{equation}
\tcr{By Lemma \ref{Lem:high}, we may chose a large $s>0$ such that $u_2(O,s)\neq 0$.  On the other hand,
since $q_1-q_2$ lies in the admissible set $S(A,b)$, by arguing analogously to the proof of Theorem \ref{Main Theorem wave}, we obtain $q_1=q_2$
 in $B_\varepsilon(\mathcal{O})$.
 \mm{Now applying the unique continuation argument concludes that} $q_1=q_2$ in $D$. }
\end{proof}

\begin{remark}
  \tcr{The analogue of Lemma \ref{Lem:high} for wave equations was proved via asymptotic analysis with a small number $s>0$ (see Lemma \ref{Lem:asy}). For the Schr\"{o}dinger equation, we shall carry out the proof by taking a large number $s>0$,
  \mm{as the proof of Lemma \ref{Lem:asy} does not apply to the Schr\"odinger equation.}}
  \end{remark}

	\section{Proof of Theorem \ref{Main theorem time-harmonic}}\label{Sec:Proof of main theorem time-harmonic}
	\begin{proof}[Proof of Theorem \ref{Main theorem time-harmonic}] \mm{We give a sketch of the proof}.
	Suppose  $D_1\neq D_2$. Without loss of generality, we may assume that there exists a corner point $\mathcal{O}\in \partial D_1\backslash \partial D_2$ and $B_\epsilon(\mathcal{O})\cap \overline{D}_2=\emptyset$ for some $\epsilon>0$.
	We recall \mm{the notations that} $\Sigma:=B_{\epsilon}(\mathcal{O})\cap D_{1}$ and  $\Gamma :=B_\epsilon(\mathcal{O})\cap \partial D_1$. Then we get
	\begin{equation*}
	\begin{aligned}
	\begin{cases}
	\Delta u_{1}+k^{2}n_{1}(x)u_{1}=0 \ \mbox{in}\ \Sigma,\\
	\Delta u_{2}+k^{2}u_{2}=0\ \mbox{in}\ \Sigma,\\
	u_{1}=u_{2},\ \PD_{\nu}u_{1}=\PD_{\nu}u_{2}\ \mbox{on} \ \Gamma,
	\end{cases}
	\end{aligned}
	\end{equation*}
	\mm{and the} difference $u:=u_{1}-u_{2}$ solves the Cauchy problem
	\begin{equation}\label{Equation for u time harmonic}
	\begin{aligned}
	\begin{cases}
	\Delta u+k^{2}n_{1}(x)u=k^{2}\lb 1-n_{1}(x)\rb u_{2}\ \mbox{in}\ B_{\epsilon}(\mathcal{O}),\\
	u=\PD_{\nu}u=0\ \mbox{on}\  \Gamma.
	\end{cases}
	\end{aligned}
	\end{equation}
	\mm{Noting that} $u_{2}$  solves $\Delta u_{2}+k^{2}u_{2}=0$ in $B_{\epsilon}(\mathcal{O})$,  its lowest order expansion around $O$ is harmonic. Then using Lemma \ref{Lemma about uniqueness for source term}, we get that $u_2\equiv 0$ in $\Sigma$,
	and further by unique continuation, $u_2\equiv 0$ in $\R^2$, which is impossible. Therefore we have $D_{1}=D_{2}:=D$.
	Now setting $\Sigma:=B_\varepsilon(\mathcal{O})\cap D$ and $\Gamma:= B_\varepsilon (\mathcal{O})\cap \PD D$, then
	\begin{equation}
	\begin{aligned}
	\begin{cases}
	\Delta u+k^{2}n_{1}(x)u=k^{2}\lb n_{2}-n_{1}\rb u_{2},\ \mbox{in} \ \Sigma\\
	u=\PD_{\nu}u=0,\ \mbox{on}\ \Gamma.
	\end{cases}
	\end{aligned}
	\end{equation}
	Since $n_{i}\in S(A,b)$ and $u_{2}(\mathcal{O})\neq 0$,  again applying Lemma \ref{Lemma about uniqueness for source term}  we get $n_{1}=n_{2}$ in $\R^2$.
\end{proof}

\section{Concluding remarks}\label{cr}
\mm{This work has been mainly devoted to the target identification and coefficient recovery problems for
the time-dependent wave and Schr\"odinger equations as well as the Helmholtz equation.
We have considered the penetrable scatterers with transmission conditions on the interface which are not much studied in the literature.  As we are interested in the important case when only a single dynamical data is available,
our investigations have been restricted to convex polygonal scatterers
and an admissible set of coefficients that include harmonic functions.

There are several interesting topics that deserve further investigation.
The first topic is the uniqueness for the important cases when the data is available only on a finite period of time
as well as for general penetrable scatterers. Note that our general idea of applying the Laplace transform relies heavily
on the data available over the infinite time, which can not be carried out to the case of the dynamical data
available only on a finite period of time.
However, we believe that the uniqueness results in recovering the shape (the first part in Theorems \ref{Main Theorem wave}  and \ref{THM2} ) can be generalized to non-polygonal convex penetrable scatterers,
while the Laplace transform provides the measurement data for each parameter (frequency).
The second topic is how to design efficient inversion algorithms for recovering convex polygonal scatterers
with only a single dynamical data,
based on the theory that has been developed here and some existing numerical schemes for time-harmonic inverse problems.
}

\mm{
\section*{Acknowledgements}
I. Ben A\"icha and M.  Vashisth were supported by the  NSAF grant (No. U1930402).
The work of J. Zou was substantially supported by Hong Kong RGC General Research Fund
(Project 14304517) and National Natural Science Foundation
of China/Hong Kong Research Grants Council Joint Research Scheme 2016/17 (project N\_CUHK437/16).
}

\end{document}